\theoremstyle{plain}
\newtheorem{theorem}{Theorem}[section]
\newtheorem{theoremn}{Theorem}
\newtheorem{proposition}[theorem]{Proposition}
\newtheorem{lemma}[theorem]{Lemma}
\newtheorem{corollary}[theorem]{Corollary}
\newtheorem{remark}[theorem]{Remark}
\theoremstyle{definition}
\newtheorem{definition}[theorem]{Definition}
\theoremstyle{remark}
\newtheorem{claim}{Claim}
\DeclareMathOperator{\Bl}{Bs}
\DeclareMathOperator{\cod}{cod}
\DeclareMathOperator{\mult}{mult}
\DeclareMathOperator{\Aut}{Aut}
\DeclareMathOperator{\Sing}{Sing}
\newcommand{\QED}{\ifhmode\unskip\nobreak\fi\quad {\rm Q.E.D.}} 
\newcommand\map{\dasharrow}
\newcommand\iso{\cong}
\newcommand{\f}{\varphi}
\renewcommand{\H}{\mathcal{H}}
\newcommand{\I}{\mathcal{I}}
\newcommand{\K}{\mathcal{K}}
\renewcommand{\L}{\mathcal{L}}
\newcommand{\M}{\mathcal{M}}
\newcommand\Mo[1]{\overline{M}_{0,#1}}
\renewcommand{\O}{\mathcal{O}}
\renewcommand{\P}{\mathbb{P}}
\newcommand{\R}{\mathbb{R}}
\newcommand{\rat}{\dasharrow}
\begin{document}
\title{The automorphisms group of $\Mo{n}$}

\author{Andrea Bruno and
Massimiliano Mella}
\address{ Dipartimento di Matematica \\
Universit\`a Roma Tre \\L.go S.L.Murialdo, 1 \\
 \newline
\indent 00146 Roma Italia}
\address{Dipartimento di Matematica\\
Universit\`a di Ferrara\\
Via Machiavelli 35\\
 \newline
\indent 44100 Ferrara Italia}
\email{bruno@mat.uniroma3.it}
\email{mll@unife.it}

\date{May 2010}
\subjclass{Primary 14H10 ; Secondary 14D22, 14D06}
\keywords{Moduli space of curves,
pointed  rational curves, fiber type morphism, automorphism}
\thanks{Partially supported by Progetto PRIN 2008 ``Geometria
  sulle variet\`a algebriche'' MUR}
\maketitle


\section*{Introduction}
The moduli space $M_{g,n}$ of smooth
n-pointed curves of genus $g$,
and its projective closure, the Deligne-Mumford
compactification $\overline{M}_{g,n}$,
is a classical object of study that reflects
many of the properties of
families of pointed curves. As a matter of fact, the
study of its biregular geometry is of interest
in itself and has become a central theme in various
areas of mathematics.

Already for small $n$, the moduli spaces
$\Mo{n}$ are quite intricate objects deeply rooted
in classical algebraic geometry. Under this perspective,
Kapranov showed in \cite{Ka} that $\Mo{n}$ is identified with the
closure of the subscheme of the Hilbert scheme parametrizing
rational normal curves passing through
$n$  points in linearly general position in $\P^{n-2}$.
Via this identification, given $n-1$ points in linearly
general position in $\P^{n-3}$, $\Mo{n}$ is isomorphic
to an iterated blow-up of $\P^{n-3}$ at the strict
transforms of all the
linear spaces spanned by subsets of the points in order
of increasing dimension.
In a natural way, then, base point free linear systems on
$\Mo{n}$ are identified
with linear systems on $\P^{n-3}$ whose base locus is
quite special and supported on
so-called vital spaces, i.e. spans of subsets of the given points.
Another feature of this picture is that all these
vital spaces correspond to divisors in $\Mo{n}$ which have
a modular interpretation as products of $\overline{M}_{0,r}$ for $r<n$.
In this interpretation, the modular forgetful maps
$\phi_I: \Mo{n} \to \overline{M}_{0,n-|I|}$, which forget points
indexed by $I \subset \{ 1, \ldots, n \}$, correspond, up to
standard Cremona transformations, to linear projections from vital spaces.
The aim of this paper is to study automorphisms of $\Mo{n}$
with the aid of Kapranov's beautiful description. 

It is expected that the only possible biregular automorphisms of $\Mo{n}$ are the one 
associated to a permutation of the markings. Any such morphism has to permute the forgetful maps
onto $\Mo{n-1}$ as well. This induces, on 
$\P^{n-3}$, special birational maps that switch lines through $n-1$ points in general position.
  On the other hand if we
 were able to prove that any automorphism has to permute forgetful maps this should lead to a
proof that every automorphism is a permutation.
Our main tool to classify $Aut(\Mo{n})$ is therefore the following Theorem.
 
\begin{theoremn}
Let
  $f:\overline{M}_{0,n}\to
  \overline{M}_{0,r_1}\times\ldots\times\overline{M}_{0,r_h}$ be a dominant morphism with connected fibers. Then  $f$
  is a forgetful map.
\end{theoremn}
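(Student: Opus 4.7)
The plan is to combine Kapranov's realisation of $\overline{M}_{0,n}$ as an iterated blow-up of $\mathbb{P}^{n-3}$ with an induction on the relative dimension $n-3-\sum_i(r_i-3)$, after reducing to the case $h=1$.

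For the reduction, $f$ is the product of its components $f_i=\pi_i\circ f\colon\overline{M}_{0,n}\to\overline{M}_{0,r_i}$, each of which is dominant. A general fibre of $f_i$ equals $f^{-1}\bigl(\{y_i\}\times\prod_{j\ne i}\overline{M}_{0,r_j}\bigr)$, which maps onto the connected product $\prod_{j\ne i}\overline{M}_{0,r_j}$ with connected fibres (those of $f$); it is therefore connected, so the Stein factorisation of $f_i$ is trivial. Granting the $h=1$ case, each $f_i$ is a forgetful map $\phi_{I_i}$ and $f=\prod_i\phi_{I_i}$ is the desired product-forgetful morphism.

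For $h=1$ I would argue by induction on $n-r$. The base $n=r$ is immediate, since a birational morphism with connected fibres between smooth projective varieties is an isomorphism. For the inductive step the goal is to exhibit a marking $j\in\{1,\dots,n\}$ such that $f$ contracts every fibre of the one-point forgetful $\phi_j\colon\overline{M}_{0,n}\to\overline{M}_{0,n-1}$, giving a factorisation $f=g\circ\phi_j$ with $g\colon\overline{M}_{0,n-1}\to\overline{M}_{0,r}$ dominant and with connected fibres, to which the inductive hypothesis applies. To locate such a $j$, I would pass to Kapranov's model based at some marking: $\overline{M}_{0,n}$ becomes an iterated blow-up of $\mathbb{P}^{n-3}$ at vital subspaces through $n-1$ general points, the fibres of $\phi_j$ are the strict transforms of the pencil of lines through $p_j$, and $f$ corresponds to a linear system on $\mathbb{P}^{n-3}$ whose base locus is supported on the vital stratification. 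A dimension count, using that the general fibre of $f$ is swept out by rational curves pulled back from the universal curves on the target, should force the generic $\phi_j$-fibre to be contracted by $f$ for some $j$.

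The main obstacle is upgrading this generic contraction to the contraction of \emph{every} $\phi_j$-fibre, which is what secures the factorisation $f=g\circ\phi_j$. The key input is that pullbacks of boundary divisors of $\overline{M}_{0,r}$ must be effective combinations of boundary divisors of $\overline{M}_{0,n}$, so that the numerical class of a $\phi_j$-fibre is $f$-trivial; combining this numerical rigidity with the intersection theory on Kapranov's blow-up and the irreducibility of the $\phi_j$-fibres should deliver the set-theoretic factorisation and let the induction run.
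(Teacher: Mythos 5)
Your reduction to the case $h=1$ is correct and coincides with what the paper does: Corollary \ref{cor:main} is deduced from the single-factor Theorem \ref{th:forg} by composing with the projections, and your Stein-factorisation argument for the connectedness of the fibres of each $\pi_i\circ f$ is exactly the implicit content of that one-line deduction. The skeleton of your $h=1$ argument is also sound in principle: if one can always exhibit a marking $j$ such that $f$ contracts \emph{every} fibre of $\phi_j$, then $f=g\circ\phi_j$ with $g$ dominant with connected fibers and an induction closes the argument; this factorisation criterion is precisely Proposition \ref{pro:ind} of the paper (where the passage from numerical $\phi_j$-triviality to an honest factorisation via $\operatorname{Pic}(\Mo{n}/\Mo{n-1})=\operatorname{Num}(\Mo{n}/\Mo{n-1})$ and the sections $s_{j,h}$ is carried out).

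The genuine gap is that producing such a $j$ is the entire content of the paper, and you replace it with a heuristic. In the Kapranov model $f_i$, the fibres of $\phi_j$ (for $j\neq i$) are the lines through $p_j$, and $f$ corresponds to a linear system $\L_i\subset|\O_{\P^{n-3}}(d)|$ with base locus on the vital stratification; contracting the $\phi_j$-fibres amounts to $\mult_{p_j}\L_i=d$. No dimension count forces this: a priori $\L_i$ could be a high-degree system whose base locus is a complicated vital cycle with small multiplicity at every Kapranov point, and ruling this out is exactly what Lemmata \ref{lem:bs}, \ref{step1}, \ref{step2}, \ref{lem:connect}, \ref{fixed} and the induction in Theorem \ref{thm:p1} accomplish (one must first show the base locus is of type $\Phi_1$, i.e.\ linear of codimension two up to standard Cremona transformations). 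Your proposed ``key input'' --- that pullbacks of boundary divisors of $\Mo{r}$ are effective combinations of boundary divisors of $\Mo{n}$ --- is unjustified for an arbitrary dominant $f$: nothing yet prevents a component of the preimage of a boundary divisor from having a smooth curve as its general member, and this assertion is essentially equivalent to what you are trying to prove. Note finally that your induction on $n-r$ has a trivial base case, so all of the difficulty is concentrated in the inductive step, which already for $r=4$ is the hardest theorem of the paper; the paper instead inducts on $r$ with base case Theorem \ref{thm:p1}, and it is there, not in the formal induction, that the work lies.
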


The above Theorem is an easy extension of the same statement with one factor only and the latter is
obtained via an inductive argument starting from the case of a morphism with connected fibers onto $\P^1$. 

\begin{theoremn}
Any dominant morphism with connected fibers
  $f:\overline{M}_{0,n}\to \Mo{4}\iso\P^1$
  is a forgetful map.
\end{theoremn}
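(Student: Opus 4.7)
The approach is induction on $n$. The base $n=4$ is immediate, since a dominant $\P^1\to\P^1$ with connected fibers is an isomorphism, and up to an automorphism of $\Mo{4}$ this is the identity, the unique forgetful map.

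For the inductive step, I would restrict $f$ to each boundary divisor $\delta_I\iso\Mo{|I|+1}\times\Mo{|I^c|+1}$, for $I\sqcup I^c=\{1,\ldots,n\}$ with $|I|,|I^c|\geq 2$. Either $f|_{\delta_I}$ is constant (so $\delta_I$ sits in a fiber of $f$), or it is dominant onto $\P^1$. In the dominant case the key claim is that $f|_{\delta_I}$ factors through one of the two projections of the product. To see this, fix a point $y_1$ in the first factor: either $f(y_1,-)$ is constant for every $y_1$, in which case $f|_{\delta_I}$ factors through the first projection, or it is non-constant for generic $y_1$, and after Stein factorisation the inductive hypothesis identifies $f(y_1,-)$ with a forgetful map $\pi_{K(y_1)}$ composed with some $\alpha(y_1)\in\mathrm{PGL}_2$. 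The discrete index $K(y_1)$ is then locally constant on the irreducible first factor, hence constant, while $\alpha$ defines a morphism from a projective variety to the affine group $\mathrm{PGL}_2$, hence is also constant. Therefore $f|_{\delta_I}$ factors through the second projection and, by induction, is a forgetful map onto $\Mo{4}$, singled out by a distinguished $4$-subset $S_I$ of markings lying on one side of $I$.

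The remaining task is to assemble the local data $\{S_I\}$ into a single $4$-subset $S\subset\{1,\ldots,n\}$ so that $f=\pi_{\{1,\ldots,n\}\setminus S}$ up to a post-composition by $\mathrm{Aut}(\Mo{4})$. Compatibility of $S_I$ and $S_J$ across two overlapping boundary divisors follows by restricting $f$ to their codimension-two intersection $\delta_I\cap\delta_J$ and propagating, which reduces the issue to a connectivity statement on the dual complex of the boundary. The main obstacle I foresee is the degenerate scenario in which $f$ contracts too many boundary divisors, so that the inductive step yields no local data on those $\delta_I$; here I would switch to Kapranov's realisation of $\Mo{n}$ as an iterated blow-up of $\P^{n-3}$ and analyse directly the pencil of hypersurfaces on $\P^{n-3}$ pulled back from $f$. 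Its base locus is forced to be supported on vital subspaces, and such pencils are extremely rigid: up to the standard Cremona transformations appearing in Kapranov's description they must arise from linear projection away from a vital subspace, which is exactly the geometric incarnation of a forgetful map.
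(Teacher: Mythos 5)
Your strategy---induction on $n$ by restricting $f$ to boundary divisors---is essentially the McKernan--Tevelev proof reproduced in the paper, but the two places where you appeal to ``rigidity'' are exactly where all the content lives, and as written they are gaps. First, the case in which $f$ contracts boundary divisors is not a degenerate edge case: the forgetful map $\pi_{S^c}$ with $S=\{a,b,c,d\}$ contracts \emph{every} $\delta_J$ with $|J\cap S|=2$, so this is the situation that always occurs. Indeed, in the paper's second proof the assumption that $f$ is non-constant on every $\delta_{ij}$ leads to a \emph{contradiction} (the induced forgetful maps $f'_{ij}$ cannot be made compatible on overlaps, an argument that moreover needs $n\geq 7$), so your hope of assembling the local data $\{S_I\}$ over the dual complex points the wrong way: in the fully non-degenerate case the local data are inconsistent rather than gluable. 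Even where they do glue, you still owe an argument that agreement of $f$ with $\pi_{S^c}$ on boundary divisors forces equality on all of $\Mo{n}$; the boundary is not ample and this is not automatic.

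Second, and more seriously, your fallback for the contracted case---that a pencil on $\P^{n-3}$ with base locus on vital subspaces ``must arise from linear projection away from a vital subspace up to standard Cremona transformations''---is a restatement of the theorem in Kapranov coordinates, not a proof of it. It is true that $\Bl\L_i$ must lie in the union of the vital subspaces (otherwise the strict transform on $\Mo{n}$ would retain base points), but nothing a priori bounds the degree of the pencil or forces its base locus to be a union of vital cycles with the correct multiplicities; ruling out all other pencils is precisely the hard part. The paper does this through the chain of Lemmas \ref{lem:bs}, \ref{step1}, \ref{step2}, \ref{lem:connect} and \ref{fixed} (first-order dominance at a Kapranov base point, dominance of the restriction to a suitable vital hyperplane, elimination of fixed components, and the classification of the base locus as type $\Phi_1$, with a separate treatment of $n=6$ where the Cremona-transformed pencil of conics genuinely appears); the alternative proof replaces all of this by the Losev--Manin toric argument, showing the induced pencil on $L_n$ is everywhere defined because its base locus would have to sit inside the indeterminacy locus of $L_n\rat\Mo{n}$, and then classifying toric fibrations to $\P^1$ by a fan computation. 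You need one of these mechanisms, or a substitute; without it the proposal assumes its conclusion at the decisive step. (Minor: Stein factorisation gives $f(y_1,-)=\alpha(y_1)\circ\pi_{K(y_1)}$ with $\alpha(y_1)$ a finite map of possibly high degree, not an element of $\mathrm{PGL}_2$; your constancy argument survives since the space of degree-$d$ self-maps of $\P^1$ is affine, but the degree must also be shown to be locally constant.)
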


The idea of proof is as follows. Any morphism of this type produces a pencil of hypersurfaces on $\P^{n-3}$.
The base locus of this pencil has severe geometric restrictions coming from Kapranov's construction.
These are enough to prove that up to a standard Cremona transformation any such pencil is a pencil of hyperplanes.

As already observed this, together with some computation on certain birational endomorphisms of $\P^{n-3}$, is enough to describe the automorphisms group
of $\Mo{n}$.

\begin{theoremn} Assume that $n\geq 5$,
then $\Aut(\Mo{n})=S_n$, the
symmetric group on n elements.
\end{theoremn}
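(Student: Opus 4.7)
The plan is to leverage the preceding theorem in order to reduce the computation of $\Aut(\Mo{n})$ to combinatorial data on forgetful maps. Let $\varphi\in \Aut(\Mo{n})$; for each $i\in\{1,\ldots,n\}$ the composition $\pi_i\circ\varphi:\Mo{n}\to\Mo{n-1}$ is a dominant morphism with connected fibers, being the composition of an isomorphism with such a morphism. Applied with $h=1$ and $r_1=n-1$, the preceding theorem forces $\pi_i\circ\varphi$ to be itself a forgetful map, so there exist an index $\sigma(i)\in\{1,\ldots,n\}$ and an automorphism $\alpha_i$ of the target $\Mo{n-1}$ with $\pi_i\circ\varphi=\alpha_i\circ\pi_{\sigma(i)}$.

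The first step is to verify that $\sigma$ is a permutation. The fiber of $\pi_k$ through a general point $[C;p_1,\ldots,p_n]$ is obtained by moving the marking $p_k$ while fixing the rest, so distinct indices yield distinct fiber foliations on $\Mo{n}$; therefore $\pi_i$ and $\pi_j$ cannot agree even up to an automorphism of the target when $i\ne j$, and the assignment $\varphi\mapsto\sigma$ is well defined and injective on indices. Functoriality of composition upgrades this to a group homomorphism $\Phi:\Aut(\Mo{n})\to S_n$, and the natural action of $S_n$ on $\Mo{n}$ by relabeling markings provides an inclusion $S_n\hookrightarrow\Aut(\Mo{n})$ whose image is sent bijectively to $S_n$ by $\Phi$. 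The statement therefore reduces to showing that $\Phi$ is injective.

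For the kernel, assume $\varphi\in\ker\Phi$, so that $\pi_i\circ\varphi=\alpha_i\circ\pi_i$ with $\alpha_i\in\Aut(\Mo{n-1})$ for every $i$. The geometric ingredient I would use is that the joint map $\Pi=(\pi_1,\pi_2):\Mo{n}\to\Mo{n-1}\times\Mo{n-1}$ is birational onto its image: a general point of the image records two $(n-1)$-pointed rational curves sharing a common $(n-2)$-pointed substructure, which has trivial automorphism group as soon as $n-2\ge 3$, so the two missing markings are uniquely recovered. The identity $\Pi\circ\varphi=(\alpha_1,\alpha_2)\circ\Pi$ then descends $\varphi$ to an automorphism of the image determined by $(\alpha_1,\alpha_2)$; repeating with a third index $k\ne 1,2$ and intersecting the resulting constraints pins down each $\alpha_i$ and forces $\varphi=\mathrm{id}$.

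The hardest point I expect is the base case $n=5$: there $\Mo{4}\cong\P^1$ and $\Aut(\Mo{4})=\mathrm{PGL}_2$ is positive dimensional, so the target automorphisms $\alpha_i$ are genuine moduli and cannot be dismissed by an inductive argument. In this case the analysis must proceed through Kapranov's identification: an element of $\ker\Phi$ corresponds to a birational self-map of $\P^2=\P^{n-3}$ preserving the projection from each of four general points, and an explicit computation shows that such a birational endomorphism can only be the identity. This is precisely the calculation ``on certain birational endomorphisms of $\P^{n-3}$'' alluded to in the introduction. For $n\ge 6$ one can instead induct on $n$, invoking the inductive hypothesis $\Aut(\Mo{n-1})=S_{n-1}$ so that each $\alpha_i$ is a discrete permutation of markings, which reduces the kernel analysis to the combinatorial compatibility of these permutations across different choices of $i$.
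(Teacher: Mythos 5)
Your first half matches the paper exactly: use the forgetful-map theorem to attach to each $\varphi$ a permutation $\sigma$, check this gives a surjective homomorphism $\Phi\colon\Aut(\Mo{n})\to S_n$ split by the relabeling action, and reduce to showing $\ker\Phi$ is trivial. The divergence, and the gap, is in the kernel analysis. Your inductive step for $n\geq 6$ ("intersecting the resulting constraints pins down each $\alpha_i$") is an assertion, not an argument; it can in fact be completed by comparing the two factorizations of $\phi_{\{i,j\}}\circ\varphi$ through $\pi_i$ and through $\pi_j$ and using that the forgotten index set of a forgetful map onto $\Mo{n-2}$ is well defined (this needs $n-2\geq 4$, i.e.\ $n\geq 6$), but as written nothing forces $\alpha_i=\mathrm{id}$, and without that the birationality of $(\pi_1,\pi_2)$ buys you nothing. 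More seriously, you correctly identify the base case $n=5$ as the crux and then leave it as ``an explicit computation shows that such a birational endomorphism can only be the identity.'' That computation is not a routine check one can wave at: it is essentially the entire content of the paper's kernel argument, and your proof is incomplete without it.

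For comparison, the paper avoids the induction on $n$ altogether and treats all $n\geq 5$ uniformly. An element $g$ of the kernel sends fibers of each $\phi_j$ to fibers of $\phi_j$; transporting $g$ through a Kapranov map $f_n$ gives a Cremona transformation $\gamma_n$ of $\P^{n-3}$, defined by some linear system $\H_n\subset|\O(d)|$, which must stabilize the pencil of lines through each of the $n-1$ Kapranov points (images of fibers of $\phi_j$, $j<n$) and the family of rational normal curves through all of them (images of fibers of $\phi_n$). The two degree conditions $d-\mult_{p_i}\H_n=1$ and $(n-3)d-\sum_i\mult_{p_i}\H_n=n-3$ combine to give $n-3=(n-3)d-(n-1)(d-1)$, hence $d=1$, so $\gamma_n$ is a projectivity fixing $n-1$ points in linear general position in $\P^{n-3}$, i.e.\ the identity. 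If you want to keep your inductive architecture, you still need precisely this kind of projective computation for $n=5$; at that point it is simpler to run it for all $n$ at once, as the paper does.
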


This
result has a natural counterpart in 
the Teichm\"uller-theoretic literature on 
the automorphisms of moduli spaces
$M_{g,n}$ developed in a series of
papers by  Royden, Earle--Kra, and
others, \cite{Ro}, \cite{EK},
\cite{Ko}, but we do not see a
straightforward way to go from one to the
other. 

 In this paper we study
``modular'' fiber type morphisms
on $\Mo{n}$ via the study of linear systems on
$\P^{n-3}$ and applying whenever possible classical
projective techniques. This program has been
recently pursued also by  Bolognesi, \cite{Bo},
in his description of birational models of
$\Mo{n}$. In a forthcoming paper,
\cite{BM}, we plan to study fiber type
morphisms of $\Mo{n}$ onto either low dimensional
varieties or with low $n$ or
with   linear general fiber.

The second named author would like to
thank Gavril Farkas for rising his
attention on Kapranov's paper,
\cite{Ka}, and the possibility to use
projective techniques in the study of $\Mo{n}$.
It is a pleasure to thank Gavril Farkas,
Angela Gibney,
 and Daniel Krashen, for their
enthusiasm  in a preliminary version of
these papers and for their suggestions to
clarify the contents, many thanks are
also due to Brendan Hassett for
Teichm\"uller bibliography and James M\textsuperscript{c}Kernan
and Jenia Tevelev for the nice
alternative proof of Theorem \ref{thm:p1}.

\section{Preliminaries}
\label{sec:notation}
We work over the field of complex numbers. An n-pointed curve
of arithmetic genus $0$ is the datum $(C; q_1, \ldots, q_n)$ of
a tree of smooth rational curves and $n$ ordered points on the
nonsingular locus of $C$ such that each component of $C$
has at least three points which are either marked or singular
points of $C$. If $n \geq 3$, $\Mo{n}$ is the smooth
$(n-3)$-dimensional scheme
constructed by Deligne-Mumford, which is the fine moduli scheme
of isomorphism classes $[(C; q_1, \ldots, q_n)]$ of stable n-pointed curves
of arithmetic genus $0$.

\noindent For any $ i \in \{1, \ldots, n \}$ the forgetful map
$$\phi_i : \Mo{n} \to \overline{M}_{0,n-1}$$
is the surjective morphism which associates
to the isomorphism class $[(C; q_1, \ldots, q_n)]$ of
a stable n-pointed rational curve $(C; q_1, \ldots, q_n)$
the isomorphism class of the $(n-1)$-pointed stable
rational curve obtained by forgetting $q_i$ and, if any,
contracting to a point a component of $C$ containing
only $q_i$, one node of $C$ and another marked point, say $q_j$.
The locus of such curves forms a divisor,
that we will denote by $E_{i,j}$. The morphism $\phi_i$ also plays the
role of the universal curve morphism, so that its
fibers are all rational curves transverse to
$n-1$ divisors $ E_{i,j}$.
Divisors $E_{i,j}$ are the images
of $n-1$ sections $s_{i,j}: \Mo{n-1} \to \Mo{n}$
of $\phi_i$. The section $s_{i,j}$ associates to
$[(C; q_1, \ldots,q_i^{\vee}, \ldots,  q_n)]$
the isomorphism class of the n-pointed stable
rational curve obtained by adding at $q_j$ a smooth rational curve
with marking of two points, labelled by $q_i$ and $q_j$.
Analogously, for every $I \subset \{1, \ldots, n \}$,
we have well defined forgetful
maps $\phi_I : \Mo{n} \to \overline{M}_{0,n-|I|}$.
>From our point of view the important part of a forgetful map is the set of forgotten index, more than the actual marking of the remaining. For this we slightly abuse the language and introduce the following definition.
\begin{definition}
\label{def:forg} A forgetful map is the composition of $\phi_I:\Mo{n}\to \Mo{r}$ with an automorphism $g\in Aut(\Mo{r})$ that permutes the markings.
\end{definition}
\noindent In order to avoid trivial cases we will always
tacitly consider $\phi_I$ only if $n-|I| \geq 4$.

Besides the canonical class $K_{\Mo{n}}$, on
$\Mo{n}$ are defined line bundles $\Psi_i$ for each $i \in \{1, \ldots, n \}$ as follows:
the fiber of $\Psi_i$ at a point $[(C; q_1, \ldots, q_n)]$ is the tangent line $T_{C,p_i}$.
Kapranov, in \cite{Ka} proves the following:

\begin{theorem}
Let $p_1,\ldots,p_n \in \P^{n-2}$ be points in linear
general position. Let $H_n$ be the Hilbert scheme
of rational curves of degree $n-2$ in $\P^{n-2}$.
$\Mo{n}$ is isomorphic
to the subscheme $H \subset H_n$ parametrizing
curves containing $p_1, \ldots, p_n$.
For each $i \in  \{1,\ldots,n\}$ the line bundle
$\Psi_i$ is big and globally generated
and it induces a morphism $f_i : \Mo{n} \to \P^{n-3}$
which is an iterated blow-up
of the projections from $p_i$ of the given points
and of all strict transforms of the
linear spaces they generate, in order of increasing dimension.
\label{kapranov}
\end{theorem}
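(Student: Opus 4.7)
The plan is to prove this classical result of Kapranov in two stages: first identify $\Mo{n}$ with $H$, then analyze the morphism $f_i$ induced by $\Psi_i$. My approach would be roughly as follows.

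\textbf{Morphism to $H$.} For a stable $n$-pointed rational curve $(C;q_1,\ldots,q_n)$, the line bundle $\omega_C(q_1+\cdots+q_n)$ has degree $n-2$ on $C$ and, by Riemann--Roch together with the vanishing $H^1(C,\omega_C(\sum q_i))=0$ (which follows from stability, component by component), satisfies $h^0=n-1$. Its complete linear system embeds $C$ as a connected degree-$(n-2)$ curve in $\P^{n-2}$ with the marked points in linearly general position. Because both $\{q_i\}$ and $\{p_i\}$ are in general position, there is a unique element of $\mathrm{PGL}(n-1)$ normalizing the embedding so that $q_i\mapsto p_i$. This yields a morphism $\alpha:\Mo{n}\to H$. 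I would construct an inverse by sending $X\in H$ to the pointed curve $(X;p_1,\ldots,p_n)$; one needs to check that each component of a reducible $X$ contains enough special points to be stable (equivalently, that the combinatorics of a nodal tree in $\P^{n-2}$ through points in general position forces each component to carry at least three distinguished points). Since $\Mo{n}$ is smooth irreducible of dimension $n-3$, and a parameter count for rational normal curves through $n$ general points in $\P^{n-2}$ yields the same dimension on $H$, the two constructions match and $\alpha$ is an isomorphism.

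\textbf{The Kapranov map.} For a smooth member of $H$, $\Psi_i^{\vee}$ records the tangent direction of the embedded curve at $p_i$; hence the linear system $|\Psi_i|$ defines a morphism $f_i:\Mo{n}\to\P(T_{\P^{n-2},p_i})\cong\P^{n-3}$, which geometrically is ``project the rational normal curve from $p_i$ and read off the tangent direction at $p_i$.'' On a generic point this assignment is injective, since an RNC through $n$ general points with a prescribed tangent direction at one of them is determined, so $f_i$ is birational. The projected points $\bar p_j$, $j\ne i$, are exactly the $n-1$ general points in $\P^{n-3}$ that will serve as the centers of the Kapranov blow-up.

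\textbf{Identifying $f_i$ with the iterated blow-up.} Next I would analyze the boundary: for $I\subset\{1,\ldots,n\}\setminus\{i\}$ with $2\le|I|\le n-2$, the boundary stratum of curves where the markings in $I$ lie on a separate component gets contracted by $f_i$ onto the vital subspace $\langle\bar p_j:j\in I\rangle\subset\P^{n-3}$. Matching dimensions of exceptional loci with those of the strict transforms of the vital spaces, together with the smoothness of $\Mo{n}$, would identify $f_i$ with the iterated blow-up. The order (points first, then lines, then planes, etc.) is forced: after blowing up the points, the strict transforms of the lines become disjoint over each point, so the process factors in the stated way.

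\textbf{Main obstacle.} The delicate point is not to exhibit the exceptional divisors but to prove that $f_i$ is \emph{precisely} the iterated blow-up rather than merely a birational morphism with the correct exceptional set. I expect to spend most of the work here, via local coordinate computations on a boundary stratum $E_I$ --- comparing the normal bundle of $E_I$ in $\Mo{n}$ with the normal bundle of $\langle\bar p_j:j\in I\rangle$ (after the preceding smaller-dimensional blow-ups) in the Kapranov model --- or, alternatively, by invoking the universal property of successive blow-ups inductively together with the identification of pullbacks of hyperplane classes with $\Psi_i$ on each stratum.
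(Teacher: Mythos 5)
First, a point of comparison: the paper offers no proof of this statement at all --- Theorem \ref{kapranov} is quoted from Kapranov \cite{Ka} and used as a black box --- so there is no internal argument to measure yours against. On its own terms, your outline does reproduce the strategy of Kapranov's original proof: embed a stable curve by the complete linear system $|\omega_C(q_1+\cdots+q_n)|$ (degree $n-2$, $h^0=n-1$, with $h^1=0$ obtained by induction over the components of the tree), normalize by the unique projectivity matching the two frames of $n$ points in general position in $\P^{n-2}$, and read $f_i$ as ``tangent direction at $p_i$,'' i.e. projection from $p_i$. Your dimension counts are correct: the family of rational normal curves has dimension $n^2-2n-3$, each point imposes $n-3$ conditions, leaving dimension $n-3$, and prescribing the tangent direction at $p_i$ imposes $n-3$ more, so $f_i$ is generically injective.

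The genuine gap is precisely where you flag it, and it is not a small residue: a birational morphism between smooth varieties whose exceptional locus has the right components and dimensions need not be an iterated blow-up, so ``matching dimensions of exceptional loci'' proves nothing about the scheme structure. What is actually required --- and what occupies most of Kapranov's paper --- is either the inductive universal-property argument (factor $f_i$ through intermediate contractions, identify at each stage the line bundle $\Psi_i$ twisted by boundary divisors with the pullback of $\O(1)$ from the partial blow-up, and verify that each contraction is a smooth blow-down by a normal-bundle computation along $E_I$), or a direct identification of $H$ near a maximally degenerate curve. You name both options but carry out neither. A second elided point: before concluding that $\alpha$ is an isomorphism from ``both sides have dimension $n-3$,'' you must know that $H$, as a closed subscheme of the Hilbert scheme, is reduced and irreducible; a priori it could carry extra or embedded components supported on degenerate curves, and ruling this out is part of the content of the theorem. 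As written, this is a correct and well-organized plan, but not a proof.
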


We will use the following notations:

\begin{definition} A Kapranov set
  $\K\subset\P^{n-3}$ is an ordered  set of $(n-1)$ points
  in linear general position, labelled
  by a subset of $\{1,\ldots, n\}$.
  For any $J \subset \K$, the linear span
  of points in $J$ is said a vital linear subspace
  of $\P^{n-3}$. A vital cycle is any union of vital
  linear subspaces.
\end{definition}

To any Kapranov set, labelled by
$\{1,\ldots, i-1, i+1,\ldots, n\}$,
is uniquely associated a Kapranov map,
$f_i:\Mo{n}\to\P^{n-3}$, with $\Psi_i= f_i^{*} \O_{\P^{n-3}}(1)$,
and to a Kapranov map is uniquely associated
a Kapranov set up to projectivity.

\begin{definition} Given a subset
  $ I= \{i, i_1,\ldots,i_s\} \subset \{1,\ldots,n\}$
  and the Kapranov map
  $f_i:\Mo{n}\to\P^{n-3}$, let
  $I^*=\{1,\ldots,n\}\setminus I$. Then we indicate
  with
$$H^
{i\vee}_{I^*}:=:V^i_{I\setminus\{i\}}:=:V^i_{i_1,\ldots,i_s}:=\langle p_{i_1},\ldots,
  p_{i_s}\rangle \subset \P^{n-3}$$ the vital linear subspace
  generated by the $p_{i_j}$'s and with
$$E_I:=E_{i,i_1,\ldots,i_s}:=f_i^{-1}(V^i_I)$$ the divisor associated on
  $\Mo{n}$.
\end{definition}
Notice that  $H^{h \vee}_{ij}$ is the
hyperplane missing the points $p_i$
and $p_j$ and the set
$$\K'=\K\setminus\{p_i,p_j\}\cup(
H^{\vee}_{ij}\cap\langle p_i,p_j\rangle)$$
is a Kapranov set in $H^{h \vee}_{ij}$.

 In particular for any $i \in \{1,\ldots,n\}$ and Kapranov
set $\K=\{p_1, \ldots, p_i^{\vee}, \ldots, p_n\}$ divisors
$E_{i,j}=f_{i}^{-1}(p_j)$ are defined and such notation is compatible
with the one adopted for the sections $E_{i,j}$ of $\phi_i$.
More generally, for any $i \in I \subset \{1, \ldots, n \}$ the divisor $E_I$
has the following property: its general
point corresponds to the isomorphism class of a rational
curve with two components, one with
$|I|+1$ marked points, the other with $|I^{*}|+1$ marked points,
glued together at the points not marked by elements of
$\{1,\ldots,n\}$.
It follows from this picture that
$E_I = E_{I^*}$ and that $E_I$ is abstractly isomorphic
to $\overline{M}_{0,|I|+1} \times \overline{M}_{0,|I^{*}|+1}$. The divisors
$E_I$ parametrise singular rational curves, and
they are usually called boundary divisors.
A further property of $E_I$ is that for each choice
of $i \in I, j \in I^{*}$, $E_I$ is a section of the forgetful morphism:
$$ \phi_{I \setminus \{ i \} } \times \phi_{I^{*} \setminus \{ j \} } :
 \Mo{n} \to \overline{M}_{0,|I^{*}|+1} \times \overline{M}_{0,|I|+1}.$$
This morphism is surjective and all fibers are rational curves.
With our notations $f_i(E_I)$ is a vital linear space of dimension $|I|-2$
if $i \in I$ and a vital linear space of dimension $|I^{*}|-2$ if $i \notin I$.

\begin{definition} A dominant morphism $f:X\to Y$ is called a fiber type morphism if the dimension of the general fiber
is positive, i.e. $\dim X>\dim Y$.
\end{definition}

We are interested in describing linear systems on $\P^{n-3}$ that are
associated to fiber type morphisms on
$\Mo{n}$. For this purpose we introduce
some definitions.
\begin{definition}\label{def:s} A
  linear system  on a smooth projective variety
  $X$ is uniquely determined
  by a pair $(L,V)$, where $L\in Pic(X)$
  is a line bundle and $V\subseteq
  H^0(X,L)$ is a vector space. If no
  confusion is likely to arise we will
  forget about $V$ and let
  $\L=(L,V)$.  Let
  $g:Y\to X$ be a birational morphism
  between smooth varieties. Let $A\in\L$ be a general
  element and $A_Y=g^{-1}_*A$ the strict
  transform. Then  $g^*L=A_Y+\Delta$ for
  some  effective  $g$-exceptional
  divisor $\Delta$.  The strict transform of
  $\L=(L,V)$ via $g$ is
$$g^{-1}_*\L:=(g^*L-\Delta,V_Y)$$
where $V_Y$ is the vector space spanned by the
strict transform of elements in $V$.
\end{definition}

\begin{definition}\label{def:Mk}  Let $\K\subset\P^{n-3}$
  be a Kapranov set and
  $f_i:\overline{M}_{0,n}\to\P^{n-3}$
  the associated map. An $M_{\K}$-linear system
  on $\P^{n-3}$    is a linear system
  $\L\subseteq|\O_{\P^{n-3}}(d)|$, for some $d$, such that
  $f_{i*}^{-1}\L$ is a base
  point free linear system.
 \end{definition}

Let $\L$ be an $M_\K$-linear system, and fix
$f_1:\Mo{n}\to\P^{n-3}$ a Kapranov
map. To better understand the properties
of $M_\K$-linear systems let us look
closer at $f_1$.  Let
$\epsilon:Y\to\P^{n-3}$ be the blow
up of $p_2\in\K$ with exceptional
divisor $E$. Then Kapranov's map $f_1$,
can be factored as follows

\[
\xymatrix{
\overline{M}_{0,n}\ar[d]^{f_{1}}\ar[r]^{g}
&Y\ar[dl]^{\epsilon} \\
\P^{n-3}&}
\]
with a birational morphism $g:\Mo{n}\to
Y$. The map $g$ is obtained by blowing
up, in the prescribed order, the strict
transform of every vital cycle of
codimension at least 2 in $Y$. in
particular it  is an
isomorphism on every codimension 1 point
of $E\subset Y$. With this
observation we are able to weakly control the
base locus of $\L_Y:=\epsilon_{*}^{-1}\L$, the
strict transform linear system.

\begin{lemma}
 Let $\L$ be an $M_\K$-linear
system without fixed components,
associated to the Kapranov map
$f_i:\Mo{n}\to\P^{n-3}$. Let
$\epsilon:Y\to\P^{n-3}$ be the blow up
of the Kapranov point $p_j\in\K$ with
exceptional divisor $E$. Let
$\L_Y$ be the strict transform. Then the
linear system $\L_{Y|E}$
has not fixed components.
\label{lem:bs}
\end{lemma}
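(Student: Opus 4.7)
The plan is to argue by contradiction: I would assume that the restriction $\L_{Y|E}$ has a fixed component $F$ on $E$ and derive a fixed component of $\L_{\Mo{n}}|_{E_{i,j}}$, in violation of the base-point-freeness of $\L_{\Mo{n}}=f_{i*}^{-1}\L$ (which is the very definition of an $M_{\K}$-system). The bridge between $E$ and $E_{i,j}$ is the factorization $f_i=\epsilon\circ g$ already at hand: since $g$ is an isomorphism in codimension one along $E$, the restriction $\psi:=g|_{E_{i,j}}:E_{i,j}\to E$ is a birational morphism.

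The key geometric input is that $\Delta_g|_{E_{i,j}}$ is $\psi$-exceptional, where $\Delta_g$ is the effective $g$-exceptional divisor with $g^{*}L_Y=L_{\Mo{n}}+\Delta_g$ in the sense of Definition \ref{def:s}. Indeed, $g$ blows up strict transforms of vital subspaces of codimension $\geq 2$ in $\P^{n-3}$: those through $p_j$ meet $E$ in linear subspaces of the same codimension, hence of codimension $\geq 2$ in $E$; those not through $p_j$ miss $E$ entirely. Consequently $\psi$ itself factors as an iterated blow-up of codimension $\geq 2$ loci in $E$, and every $g$-exceptional divisor either is disjoint from $E_{i,j}$ or meets it in a $\psi$-exceptional divisor. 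Verifying this bookkeeping is the main obstacle, but is routine once the geometric picture is set up.

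Now I take a general $A_Y\in\L_Y$ with strict transform $A_{\Mo{n}}$, so $g^{*}A_Y=A_{\Mo{n}}+\Delta_g$. Since $E_{i,j}$ is contained neither in $A_{\Mo{n}}$ (base-point-freeness of $\L_{\Mo{n}}$) nor in $\Delta_g$ (not $g$-exceptional), restricting to $E_{i,j}$ yields
\[ \psi^{*}(A_Y|_E)=A_{\Mo{n}}|_{E_{i,j}}+\Delta_g|_{E_{i,j}}. \]
Writing $\psi^{*}F=\tilde F+R$ with $\tilde F$ the strict transform of $F$ and $R$ $\psi$-exceptional, the assumption $A_Y|_E\geq F$ gives $A_{\Mo{n}}|_{E_{i,j}}+\Delta_g|_{E_{i,j}}\geq\tilde F+R$. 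Since $\tilde F$ is not $\psi$-exceptional it appears in neither $R$ nor $\Delta_g|_{E_{i,j}}$, and comparing multiplicities at $\tilde F$ forces $A_{\Mo{n}}|_{E_{i,j}}\geq\tilde F$ for every general $A_Y$. Hence $\tilde F$ is a fixed component of $\L_{\Mo{n}}|_{E_{i,j}}$, the desired contradiction.
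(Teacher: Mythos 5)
Your proof is correct and rests on exactly the same key fact as the paper's own argument, namely that $g$ is an isomorphism at every codimension-one point of $E$ because the centres of the blow-ups composing $g$ meet $E$ in codimension at least $2$. The paper simply states this and concludes directly that $\cod_E(\Bl\L_Y\cap E)\geq 2$, whereas you unwind the same observation into an explicit divisorial computation on $E_{i,j}$; the substance is identical.
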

\begin{proof} Let
\[
\xymatrix{
\overline{M}_{0,n}\ar[d]^{f_{i}}\ar[r]^{g}
&Y\ar[dl]^{\epsilon} \\
\P^{n-3}&}
\]
be the commutative diagram as above, with
exceptional divisor $E\subset Y$.
We noticed that $g$ is an isomorphism on
every codimension 1 point of $E$. By
hypothesis $\L$ and hence $\L_Y$ have
not fixed components. By
construction $g$ is a resolution of
$\Bl\L_Y$. This yields that
$$\cod_{E}(\Bl\L_{Y}\cap E)\geq 2,$$
and $\L_{Y|E}$ has not fixed components.
\end{proof}

A further property inherited from Kapranov's
construction is the following.

\begin{remark} \label{rem:ka}
Let $H^{h \vee}_{ij}$ be the
hyperplane missing the points $p_i$
and $p_j$ and $$\K'=\K\setminus\{p_i,p_j\}\cup
(H^{\vee}_{ij}\cap\langle p_i,p_j\rangle)$$
the associated Kapranov set.
Then $\L_{|H^{\vee}_{ij}}$
is an $M_{\K'}$ linear system.
\end{remark}

Basic examples of fiber type morphisms are forgetful maps.
Consider any set $ I \subset \{1, \ldots ,n \} $ and the associated
forgetful map $\phi_I$. If $j \notin I$ a typical diagram we will consider
is
\[
\xymatrix{
\overline{M}_{0,n}\ar[d]^{f_{j}}\ar[r]^{\phi_I}
&\overline{M}_{0,n-|I|}\ar[d]^{f_{h}} \\
\P^{n-3}\ar@{.>}[r]^{\pi_{I}}&\P^{n-|I|}}
\]
where the $f_j$ and $f_h$ are Kapranov maps and
$\pi_{I}$ is the projection from $V^j_{I }$.
In this case an $M_\K$-linear system
associated to $\Phi_I$ is given by
 $|\O(1)\otimes\I_{V^j_{I }}|$
and if $F_I$ is any fiber of $\phi_I$,
$f_j(F_I)$ is a linear space of
codimension $|I|$.

It is important, for what follows, to
understand explicitly the rational map $\pi_I$ when
$j\in I$. To do this we use Cremona
transformations.
\section{Cremona transformations and
  $M_\K$-linear systems}
\label{sec:bir}
\begin{definition}\label{def:cre} Let $\K$ be a
  Kapranov set with Kapranov map
  $f_i:\Mo{n}\to\P^{n-3}$. Then let
$$\omega^\K_j:\P^{n-3}\rat\P^{n-3}$$
 be the standard Cremona transformation centered on
  $\K\setminus \{p_j\}$. Via Kapranov's
construction we can associate a Kapranov
set  labelled by
$\{1,\ldots,n\}\setminus\{j\}$ to the
rhs $\P^{n-3}$ and in this notation
Kapranov, \cite[Proposition 2.14]{Ka} proved that
$$\omega_j^\K= f_j\circ f_i^{-1}$$ as
birational maps.
By a slight abuse of notation we can define
$\omega_j^\K(V^i_I):=f_j(E_{I,i})$,
even if $\omega_j$ is not defined on the
general point of $V^i_I$.
\end{definition}

\begin{remark}\label{rem:kcre}
Let $\omega^\K_h$ be the standard
Cremona transformation centered on
$\K\setminus\{p_h\}$, and
$\K'$ the Kapranov set associated to the hyperplane $H^{i
    \vee}_{jk}$. Then for $h\neq
j,k$ we have  $\omega^\K_{h|H^{i
    \vee}_{jk}}=\omega^{\K'}_h$. This
extends to arbitrary vital linear spaces.
It follows from the definitions that
$\omega_h^\K(V^i_I)=V^h_{I \setminus \{h\},i }$
if $h \in I$ and $\omega_h^\K(V^i_I)=V^h_{(I\cup\{i\})^{*}
\setminus \{h\}}$ if $h \in I^{*}$.
\end{remark}

Let us start with the special case of
forgetful maps onto $\Mo{4}\iso\P^1$.
Let
$$\phi_I:\overline{M}_{0,n}\to
  \P^1\iso\overline{M}_{0,4}$$ be a
  forgetful map and
  \hbox{$\M=\phi_I^*\O(1)$.} Choose a Kapranov
  map
  $f_i:\overline{M}_{0,n}\to\P^{n-3}$
  with
  $\L:=f_{i*}\M\subset|\O(1)|$. As
  already noticed this is
  equivalent to choose $i\not\in I$.
Then $\Bl\L=P$ is a codimension 2 linear
space and we may assume, after
reordering the indexes, that
$$\K\setminus(\Bl\L\cap
\K)=\{p_1,p_2,p_3\} \mbox{ and $i=4$}.$$

To understand what is the linear system
$\L_5:=f_{5*}\M$ we use Kapranov
description of the map
$\omega^\K_5$, see Definition
\ref{def:cre}.

This is  well known but we
decided to write it down for readers
less familiar with the classical subject
of Cremona Transformations.
The map is the standard
Cremona transformation centered on
$\{p_1,p_2,p_3,p_6,\ldots,p_n\}$. Let
$$\omega^\K_5:\P^{n-3}\rat
\P^{n-3}=:\P $$
be the map given by the linear system
$$|\O_{\P^{n-3}}(n-3)\otimes(\otimes_{i\in\{1,2,3,6,\ldots,n\}}
\I^{n-4}_{p_i})|.$$
Let
\[
\xymatrix{
&Z\ar[dl]_{p}\ar[dr]^{q}
&\\
\P^{n-3}\ar@{.>}[rr]^{\omega^\K_5}&&\P}
\]
be the usual resolution obtained by
blowing up, in dimension increasing order,
all linear spaces spanned by points in
$\{p_1,p_2,p_3,p_6,\ldots, p_n\}$.
Let $l\subset\P$ be a general line then
$q^{-1}$ is well defined on $l$ and
$p(q^{-1}(l))$ is a rational normal curves passing
through
$\{p_1,p_2,p_3,p_6,\ldots,p_n\}$. Let
$E_i\subset Z$ be the exceptional
divisor corresponding to the blow up of the
points $p_i$. Then we have $q^{-1}(l)\cdot E_i=1$,
for $i\in\{1,2,3,6,\ldots,n\}$. While
 $q^{-1}(l)\cdot F$ vanishes for any other
$p$-exceptional divisor.
This description allows us to easily
compute  the degree of $\L_5$
$$\deg\L_5=\deg\omega_5^\K(\L)=(n-3)-\sum_{h\neq 5}\mult_{p_h}\L=2$$
This yields
$\omega_5^\K(\L)\subset|\O(2)|$.

To complete the analysis  we have to
understand the base locus of this system
of quadrics.
Let $\K_5$ be the Kapranov set labelled by
$\{1,2,3,4,6,\ldots,n\}$.

In our convention, see Definition
\ref{def:cre}, for $\{i,j\}\subset\{1,2,3\}$
we have
$$\omega^\K_5(V^4_{i,j})=V^5_{\{i,j,4\}^*\setminus\{5\}}$$
That is this line is sent to
a codimension two linear space.
Let $P_{h}=\omega^\K_5(V^4_{i,j})$, for
$\{i,j,h\}=\{1,2,3\}$.
The general
element in $\L$ intersects a general
point of $V^4_{i,j}$ and therefore its
transform via $\omega^\K_5$ has to
contain $P_h$. The
hypothesis  $5\in
I$ tell us that the map
$\omega^\K_5$ is well defined on the
general point of $P=V^4_I$.
Let $P_4=\omega^\K_5(P)$, and $S:=\omega^\K_5(\langle
p_1,p_2,p_3\rangle)$. Then $P_4$ has to be
contained in $\Bl\L_5$ and
$$S=\cap_{i=1}^4 P_i.$$
In conclusion we have:
\begin{itemize}
\item[.]  $\Bl\L_4=\cup_{i=1}^4 P_i\supset\K_5$
\item[.]  $\Sing(\L_5)=S$.
\end{itemize}\label{con:q}

Hence the linear
system $\L_5$ is a pencil of quadrics
with four codimension two linear spaces
in the base locus. That is the cone, in $\P^{n-3}$, over
a pencil of conics through 4 general
points and vertex $V^5_{I\setminus\{5\}}$.

To study fiber type morphisms from
$\Mo{n}$ it is important to control the
base locus of $M_\K$-linear
systems. The easiest base loci are those
of forgetful maps
$\phi_I:\Mo{n}\to\Mo{r}$. For this we
introduce the following definitions.
\begin{definition} Let
  $\pi_i:\P^{r-2}\to\P^{r-3}$ be the
  projection from a Kapranov point
  $p_i$, and
  $\L=|\O_{\P^{r-2}}(1)\otimes\I_{p_i}|$. Define
$${\mathcal
    C}^i_{r-3}:=\omega^\K_i(\L)\subset|\O_{\P^{r-2}}(r-2)|,$$
  to be the transform of
  hyperplanes through the point $p_i$.
We say that an $M_\K$-linear system
$\M$ on $\P^{n-3}$, has
base locus of type $\Phi_r$ if $\Bl\M$
is either a codimension $r-2$ linear
space or the cone over $\Bl{\mathcal
    C}^i_{r-3}$ with vertex a linear
space of codimension $r-1$. Equivalently $\M$ has base locus of type $\Phi_r$ if it is an $M_\K$-linear system
with linear base locus of codimension $r-2$ up to standard Cremona transformations. 
\end{definition}

In this notation ${\mathcal C}^i_1$ is a pencil
of plane conics through $4$ fixed points. The
above construction shows that to a
forgetful map $\phi_I:\Mo{n}\to\Mo{4}$
are associated $M_\K$-linear systems
with base locus of type $\Phi_1$. This is actually the main motivation of our definition.
 We
will use, and improve this observations first
in Proposition \ref{pro:C} and further
in Lemma \ref{lem:connect}. The main
point in our construction is that the
base locus of $M_\K$-linear system is
enough to characterise linear systems
inherited by forgetful maps.

The special case of forgetful maps onto
$\P^1$ is the one we use in this
paper. Nonetheless we would like to
stress that a similar behaviour applies
to an arbitrary forgetful map onto
$\Mo{r}$, for $r<n$.

\begin{proposition} Let
  $\phi_I:\Mo{n}\to\Mo{r}$ be a
  forgetful morphism. Assume that $1\in
  I$ and let
$$\xymatrix{
\overline{M}_{0,n}\ar[d]_{f_1}\ar[rr]_{\phi_I}&&\Mo{r}\ar[d]^{f_i}\\
P^{n-3}\ar@{.>}[rr]_{\pi_I}&&\P^{r-3}}
$$
be the usual diagram.
Then $\pi_I$ is given by a sublinear
system $\L_1\subset|\O(r-2)|$, the
general fiber of $\pi_I$ is  a cone, with vertex $V^1_{I\setminus\{1\}} \subset
\P^{n-3}$, over a rational
normal curve of degree $n-2-|I|$, and
$\L_1$ has Base locus of type $\Phi_r$.\label{pro:C}
\end{proposition}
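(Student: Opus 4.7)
The plan is to extend the explicit Cremona calculation carried out at the beginning of Section \ref{sec:bir} (the case $r=4$) to arbitrary $r$. The idea is to introduce an auxiliary Kapranov map $f_j$ with $j\notin I$, for which the induced map becomes a linear projection, and then transport back to $f_1$ via the standard Cremona $\omega_j^\K$.

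For the setup: since $r\geq 4$ the complement $I^*=\{1,\ldots,n\}\setminus I$ has at least four elements, so we may pick $j\in I^*$ and a further $i\in I^*\setminus\{j\}$. With Kapranov maps $f_j$ on $\Mo{n}$ and $f_i$ on $\Mo{r}$, the Kapranov set for $f_i$ sits inside that for $f_j$; by the discussion preceding this proposition, the induced map $\pi_I^{(j)}:\P^{n-3}\dashrightarrow\P^{r-3}$ is the linear projection from $V^j_I$, defined by $|\O(1)\otimes\I_{V^j_I}|$. By Definition \ref{def:cre}, $f_j=\omega_j^\K\circ f_1$, hence $\pi_I=\pi_I^{(j)}\circ\omega_j^\K$.

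Next I would compute $\deg\L_1$ by an intersection count: a general line $\ell\subset\P^{n-3}$ maps under $\omega_j^\K$ to a rational normal curve of degree $n-3$ through the $n-2$ inverse Cremona centers $\{q_k:k\in\{2,\ldots,n\}\setminus\{j\}\}$, and a general element of $\L_1$ corresponds to a general hyperplane $H'$ containing $V^j_I$. Of the $n-3$ intersections, the $|I|-1$ Cremona centers indexed by $I\setminus\{1\}$ lie in $V^j_I\subset H'$ and so are forced, leaving $n-3-(|I|-1)=r-2$ moving intersections; thus $\L_1\subset|\O(r-2)|$. The base locus of $\pi_I^{(j)}$ is the codimension-$(r-2)$ linear space $V^j_I$, and the base locus of $\L_1$ is its pullback under the standard Cremona $\omega_j^\K$; by definition $\L_1$ has base locus of type $\Phi_r$.

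The main obstacle is the cone description of the general fiber. Here the source Kapranov points $\{p_k:k\in I\setminus\{1\}\}$ are Cremona centers of $\omega_j^\K$, so the vital linear space $V^1_{I\setminus\{1\}}$ lies in the indeterminacy locus of $\omega_j^\K$ and hence in every fiber of $\pi_I$. Projecting $\P^{n-3}$ from $V^1_{I\setminus\{1\}}$ sends the remaining $r$ Kapranov points (indexed by $I^*$) to a Kapranov set in $\P^{r-2}$ for $\Mo{r+1}$, yielding a factorization
$$
\pi_I:\P^{n-3}\dashrightarrow\P^{r-2}\dashrightarrow\P^{r-3}
$$
whose first arrow corresponds to $\phi_{I\setminus\{1\}}:\Mo{n}\to\Mo{r+1}$ (a projection since $1\notin I\setminus\{1\}$) and whose second arrow corresponds to $\phi_{\{1\}}:\Mo{r+1}\to\Mo{r}$ forgetting the single remaining index $1$. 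This reduces the fiber question to the $|I|=1$ case on $\Mo{r+1}$: a direct Cremona computation in $\P^{r-2}$ analogous to the $r=4$ analysis of Section \ref{sec:bir} identifies the general fiber of the second arrow as a rational normal curve of degree $r-2$ through all $r$ Kapranov points. Pulling back through the projection, the general fiber of $\pi_I$ is the cone over this rational normal curve with vertex $V^1_{I\setminus\{1\}}$, as claimed.
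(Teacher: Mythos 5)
Your argument is correct, and its decisive step --- factoring $\phi_I=\phi_{\{1\}}\circ\phi_{I\setminus\{1\}}$ through $\Mo{r+1}$, so that $\pi_I$ becomes the linear projection from $V^1_{I\setminus\{1\}}$ followed by the degree-$(r-2)$ map of type $\mathcal{C}$ whose fibres are rational normal curves through the $r$ surviving Kapranov points --- is precisely the paper's proof (the paper labels the distinguished index of $I$ by $2$ instead of $1$, but the factorization is the same). Where you differ is in how the degree and the $\Phi_r$ claim are obtained: the paper reads both off the same factorization, since $\L_1$ is then the pullback under a linear projection of a degree-$(r-2)$ system whose base locus gives exactly the second alternative in the definition of type $\Phi_r$; you instead transport the linear system from the auxiliary model $f_j$, $j\in I^*$, through the standard Cremona $\omega^\K_j$ and count intersections with the rational normal curve that is the image of a general line. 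That count is correct and is the natural generalization of the paper's $r=4$ warm-up, but it duplicates what the factorization already gives. Two small points should be tightened. First, $\Bl\L_1$ is not literally ``the pullback of $V^j_I$'' under the Cremona: as the paper's own $r=4$ computation shows, the transformed base locus acquires extra components supported on the fundamental locus (the spaces $P_1,\ldots,P_4$ there, of which only one is the image of the original base locus); your conclusion still stands because the ``equivalently'' clause in the definition of type $\Phi_r$ only requires Cremona-equivalence to a linear base locus of codimension $r-2$. Second, ``$V^1_{I\setminus\{1\}}$ lies in the indeterminacy locus of $\omega^\K_j$, hence in every fiber of $\pi_I$'' is not a valid inference as written (a point of indeterminacy need not land in every fiber of a composition); this is harmless, since the cone description you subsequently derive from the factorization already exhibits $V^1_{I\setminus\{1\}}$ as the vertex of every fiber.
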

\begin{proof} The morphism $\Phi_I$ can
  be factored as follows
\[
\xymatrix{
\overline{M}_{0,n}\ar[d]^{\phi_{I}}\ar[r]^{\phi_{I\setminus\{2\}}}
&\Mo{r+1}\ar[dl]^{\phi_2} \\
\Mo{r}&}
\]
Hence we have the induced diagram
\[
\xymatrix{
\overline{M}_{0,n}\ar[dr]_->>>>>{f_2} \ar[d]^{\phi_{I}}\ar[r]^{\phi_{I\setminus\{2\}}}
&\Mo{r+1}\ar[dr]^{f_2}\ar[dl]_-<<<<{\phi_2} \\
\Mo{r}\ar[dr]^{f_r}&\P^{n-3}\ar[d]^{\pi_{I}}\ar[r]^{\pi_{I\setminus\{2\}}}&\P^{r-2}\ar[dl]^{\f}\\&\P^{r-3}&}
\]
where $\pi_{I\setminus\{2\}}$ is a
linear projection and $\f$ is the map induced by
${\mathcal C}^2_{r}$. The claim then follows.
\end{proof}

\section{Base point free pencils on $\Mo{n}$}

A base point free pencil $\L$ on $\Mo{n}$ is the datum of a couple $(L,V)$
on $\Mo{n}$, where $L$ is a line bundle on $\Mo{n}$
and $V \subset H^0(\Mo{n}, L)$ is a two-dimensional subspace. The natural map $V \otimes \O_{\Mo{n}} \to L$ is surjective and this datum
is equivalent to a surjective morphism $f: \Mo{n}\to\P^1$ such that $L=f^{*}(\O(1))$.

Fix a Kapranov map
$f_i:\Mo{n}\to\P^{n-3}$ and let $\L_i:=f_*\L$. Then the linear system $\L_i$
is an $M_\K$-linear system inducing a birational map
$\pi^i:\P^{n-3}\rat\P^1$. Moreover
$\L=f_{i*}^{-1}\L_i$ and $f$ is a, not necessarily
minimal, resolution of the indeterminacy
of the map $\pi^i$.
To the map $f$ we therefore  associate a diagram

$$\xymatrix{
\overline{M}_{0,n}\ar[d]_{f_i}\ar[rr]_{f}&&\P^1\ar[d]^{\iso}\\
\P^{n-3}\ar@{.>}[rr]_{\pi^i}&&\P^{1}}
$$
where $\pi^i:= f\circ f_i^{-1}$.
The rational map $\pi^i$ is uniquely associated to a
pencil $\L_i \subset |\O_{\P^{n-3}}(d_i)|$
free of fixed divisors on $\P^{n-3}$.
We have $\L_i=(\O_{\P^{n-3}}(d_i), W_i)$
where $W_i \subset H^0(\P^{n-3}, \O(d_i))$ has dimension
two, and any element of $V$ is the strict transform
of an element of $W_i$, i.e. the strict transform
map $f_{i*}^{-1}: W_i \to V$ is an isomorphism
and the support of the cokernel of the evaluation map
$$ev_i : W_i \otimes \O_{\P^{n-3}} \to \L_i$$
on $\P^{n-3}$ does not have divisorial components.
In particular the support of the cokernel of $ev_i$
is the base locus $\Bl\L_i$ of $\L_i$. \par \noindent

The most important example of dominant maps
$f: \Mo{n} \to \P^1$ is given by the forgetful
maps already described in Section \ref{sec:bir}.
The goal of this section will be to prove that
in fact any surjective map with
connected fibers $f: \Mo{n} \to \P^1$
is in fact a forgetful map.
The criterion we are going to use in order to understand
whether a morphism $f: \Mo{n} \to \P^1 $
is a forgetful map is the following:

\begin{proposition} Let $f:\overline{M}_{0,n}\to X$ be
a surjective morphism. Let
  $A\in Pic(X)$ be a base point free linear
  system and $\L_i=f_{i*}(f^*(A))$. Assume that for
  some $j$ $\mult_{p_j}\L_i=\deg
  \L_i$. Then $f$ factors through the
  forgetful map $\phi_j:\Mo{n}\to\Mo{n-1}$.

Let $f:\Mo{n}\to\Mo{r}$ be a surjective morphism
 and $\pi:\P^{n-3}\rat\P^{r-3}$ the induced map. Let
$\L_i=f_{i*}(f^*(g_{j*}^{-1}(\O(1))$ and assume
that
$\L_i\subset|\O_{\P^{n-3}}(1)|$. Then $f$
is a forgetful map.
\label{pro:ind}
\end{proposition}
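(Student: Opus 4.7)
The plan is to prove the two statements in sequence, with the second reducing to iterated application of the first.

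For the first statement, the condition $\mult_{p_j}\L_i=\deg\L_i$ says every divisor in $\L_i$ vanishes at $p_j$ to the maximum possible order, so every such divisor is a cone with apex $p_j$. Consequently $\L_i$ is the pullback of a linear system $\L_i'$ on $\P^{n-4}$ under the linear projection $\pi_{p_j}:\P^{n-3}\rat\P^{n-4}$ from $p_j$. The essential point is then Kapranov's description: projection from a Kapranov point is identified with a forgetful map via a commutative diagram
\[
\xymatrix{
\Mo{n}\ar[d]_{f_i}\ar[r]^{\phi_j}&\Mo{n-1}\ar[d]^{g_i}\\
\P^{n-3}\ar@{.>}[r]_{\pi_{p_j}}&\P^{n-4}
}
\]
in which $g_i$ is a Kapranov map on $\Mo{n-1}$. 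Hence $f^{*}A=\phi_j^{*}B$ for some line bundle $B$ on $\Mo{n-1}$. Since $\phi_j$ has connected (rational) fibers, $\phi_{j*}\O_{\Mo{n}}=\O_{\Mo{n-1}}$, and the projection formula yields $H^{0}(\Mo{n},\phi_j^{*}B)=H^{0}(\Mo{n-1},B)$. The two-dimensional space of sections defining $f$ therefore descends to $\Mo{n-1}$, so $f$ is constant on fibers of $\phi_j$ and factors through it.

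For the second statement, apply the first iteratively. Since $\L_i\subset|\O(1)|$, the rational map $\pi$ is a linear projection and $V:=\Bl\L_i$ is a linear subspace of $\P^{n-3}$. A Kapranov point $p_k$ lies in $V$ if and only if $\mult_{p_k}\L_i=1=\deg\L_i$, so the first part factors $f$ through $\phi_k$ for every such $p_k$. The hypothesis is preserved at each step because the projection from a Kapranov point sends hyperplanes through that point to hyperplanes in the quotient. Iterating over all $p_k\in V$ yields a factorization $f=f'\circ\phi_K$ with $K:=\{k:p_k\in V\}$ and $f':\Mo{n-|K|}\to\Mo{r}$.

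The main obstacle is the concluding step: to show that $n-|K|=r$, equivalently that $V$ is a vital linear space of the expected dimension $n-r-1$, containing the maximal configuration of $n-r$ Kapranov points. I expect this to follow from the compatibility of $\pi$ with the modular morphism $f$: Kapranov points $p_k\notin V$ project under $\pi$ to points that must themselves be Kapranov points of the target $\P^{r-3}$, since their preimage boundary divisors $E_{i,k}$ in $\Mo{n}$ are sent by $f$ to the boundary structure of $\Mo{r}$ along which $g_j$ is an isomorphism. A combinatorial count, exploiting the general-position of the two Kapranov sets, then forces $|K|=n-r$. Once this holds, $f':\Mo{r}\to\Mo{r}$ is a surjective endomorphism between varieties of the same dimension, hence an isomorphism; its compatibility with the Kapranov descriptions on source and target identifies it (inductively, using the main automorphism result in smaller dimension) with a permutation of markings, so $f=f'\circ\phi_K$ is a forgetful map by Definition \ref{def:forg}.
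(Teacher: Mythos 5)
Your first part is essentially the paper's argument: the key input in both is Kapranov's identification of the linear projection from $p_j$ with the forgetful map $\phi_j$. The paper phrases the descent numerically rather than via cones: a general fiber of $\phi_j$ is sent by $f_i$ to a line through $p_j$, on which $f^*A$ has degree $\deg\L_i-\mult_{p_j}\L_i=0$; since $f^*A$ is base point free and $Pic(\Mo{n}/\Mo{n-1})=Num(\Mo{n}/\Mo{n-1})$, it is $\phi_j$-trivial, so $f$ contracts the fibers and one factors through a section $s_{j,h}$. Your step ``hence $f^*A=\phi_j^*B$'' is asserted rather than proved --- the strict transform under $f_i$ of a cone over $p_j$ is not formally the $\phi_j$-pullback of a strict transform until one checks multiplicities along the exceptional divisors --- but the numerical argument supplies exactly this conclusion, so the first statement is fine and follows the same route as the paper.

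The genuine gap is in the second statement, and you flag it yourself: you never prove that $V=\Bl\L_i$ contains exactly $n-r$ Kapranov points, equivalently that $V$ is a vital linear space, so that $\phi_K$ lands in $\Mo{r}$. The route you sketch (Kapranov points outside $V$ project to Kapranov points of the target, plus a combinatorial count) is left as an expectation and is more roundabout than what is needed. The missing observation is that $\L_i$ is by construction an $M_{\K}$-linear system: its strict transform under $f_i$ is the base point free system $f^*(g_{j*}^{-1}\O(1))$, and $f_i$ only blows up vital linear spaces, so the irreducible linear space $V$ (of codimension $r-2$, since a pencil of hyperplanes dominating $\P^{r-3}$ is the complete system through its base locus) must itself be vital; by the linear general position of $\K$ it then contains exactly $n-r$ Kapranov points, and $\phi_K:\Mo{n}\to\Mo{r}$ as required. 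This is precisely what the paper encodes by writing $\L_i=|\O_{\P^{n-3}}(1)\otimes\I_{V^i_I}|$ for a vital $V^i_I$. A second, smaller slip is your claim that the surjective endomorphism $f':\Mo{r}\to\Mo{r}$ is an isomorphism ``because source and target have the same dimension''; that implication is false in general. What actually closes the argument is that the induced map $\gamma:\P^{r-3}\dasharrow\P^{r-3}$ is given by a linear system of hyperplanes, hence is a projectivity permuting the Kapranov set --- which is exactly how the paper concludes, and is what Definition \ref{def:forg} is designed to absorb.
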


\begin{proof} Let
  $\phi_j:\Mo{n}\to\Mo{n-1}$ be the
  forgetful morphism. Fibers of $\phi_j$ are mapped
  by $f_i$ to lines in $\P^{n-3}$ through the point $p_j$.
  If $\mult_{p_j}\L_i=\deg \L_i$ the restriction of $\L_i$
  to each such line has a trivial moving part. In particular
  the linear system $f^*(A)$ is base point free and
  numerically trivial on every fiber of $\phi_j$. Moreover
  $Pic(\Mo{n}/\Mo{n-1})=Num(\Mo{n}/\Mo{n-1})$,
  therefore $f^*(A)$ is $\phi_j$-trivial.
  This shows that fibers of $\phi_j$ are contracted by $f$.
  For any $h \ne i, j$ the map $\phi_j$ has a section
  $s_{j,h}: \Mo{n-1} \to E_{j,h} \subset \Mo{n}$, described
in Section \ref{sec:notation} and then a morphism $g:=f \circ s_{j,h}: \Mo{n-1} \to X$ is given
  such that $f=g \circ \phi_j$. Notice that $g$ does not depend
  on the choice of $h \ne i,j$.

Assume that
$\L_i=|\O_{\P^{n-3}}(1)\otimes\I_{V^i_I}|$ for some
vital cycle $V^i_I \subset \P^{n-3}$. Then for any vital
point $p_k \in V^i_I$ we have $\mult_{p_k}\L_i=\deg \L_i$. Then by the first statement
we have that $\phi_k$ factors $f$ for any $k\in I$. Then there is a map
$g:\Mo{r}\to\Mo{r}$ such that $f=g\circ\phi_I$. Let $\gamma:
\P^{r-3}\map\P^{r-3}$ be the induced
map. Then $\gamma$ is associated to a
linear system of hyperplanes and it is
therefore a 
projectivity that eventually permutes the Kapranov set on $\P^{r-3}$, keep in mind our Definition \ref{def:forg}.
\end{proof}

\begin{definition}\label{def:str}
Let $\L_i:=(L_i, W_i)$ be an $M_\K$-linear system
on $\P^{n-3}$ and $A_i\in\L_i$ a general
element.
Let $H=H^{i\vee}_{h,k}$ be a vital hyperplane.
We say that the restriction of $\L_i$ to $H$
is dominant if the restriction map
$res_H: W_i  \to H^0(H, L_{i|_H})$
is injective.
Let $p_j \in \K$ be a Kapranov point. Let
$\epsilon_j: Y_j \to \P^{n-3}$ be the blow-up
of $p_j$ with exceptional divisor
$E_j$. Assume that $\epsilon_j^*
A_i=A_{iY}+mE_j$. Then
$$\L_{i,Y_j}:=(\epsilon_j^*L_i-mE_j,W^Y_i)$$
is  the strict transform of $\L_i$.
We say that $\L_i$ is dominant at the
first order of $p_j$
if the pullback map
$\epsilon_j^{*}: W_i \to H^0(E_j,(\epsilon_j^* L_i-mE_j)_{|E_j})$
is injective and $\L_{i,Y_j|E_j}$ is without fixed divisors. \label{def:1st}
\end{definition}

We sketch here the ideas underlying
our argument in order to characterise
base point free pencils on $\Mo{n}$.\par \noindent
We  proceed by induction on $n$ and assume that
all base point free pencils on $\Mo{n-1}$ inducing
a surjective map with connected
fibers $f: \Mo{n-1} \to \P^1$  are
forgetful maps. The well known  case of
$n=5$ is the beginning of the induction argument.
According to our criterion, Proposition \ref{pro:ind}, for the induction step 
it is  enough to show that there exists a Kapranov map
$f_i$ and a Kapranov point $p_j$ such that
$$\mult_{p_j}\L_i=\deg\L_i.$$
To produce this point we find a vital
hyperplane $H$ such that the restriction of $\L_i$
to $H$ is dominant. Then the hyperplane $H$  has
a Kapranov set and it is the image under a Kapranov map
of $\Mo{n-1}$, see remark \ref{rem:ka}. By induction we may find the required point for $\L_{i|H}$ and then lift it to the linear system
$\L_i$.
Here is a list of concerns in applying this idea:
\begin{itemize}
\item[.] How can we find a vital hyperplane $H$ such that the restriction of
 $\L_i$ to it is dominant ?
\item[.] How to compare $\mult_{p_j}\L_{i|H}$ with $\mult_{p_j}\L_i$ ?
\item[.] What  if the restricted
morphism has either non connected
fibers or  fixed components ?
\end{itemize}
As a matter of fact, even if $(L_i,
W_i)$ is free of fixed divisors and if
it induces
a map with connected fibers, this may
not be the case for the restricted
linear system on any vital hyperplane $H
\subset \P^{n-3}$.  Keep in mind that
there are only finitely many of those.

The desired hyperplane $H$ is produced
in Lemmata \ref{step1} and
\ref{step2}. The basic idea is that the
base locus of
$\L_i$ cannot be empty because
$\P^{n-3}$ does
not carry base point free pencils so that
there exists some point $p_j$
contained
in the base locus of $\L_i$. Notice that
this does not mean that such a point is  
an isolated component of
$\Bl\L_i$. Thanks to
Lemma \ref{lem:bs} we can prove that the $M_\K$-linear
system $(L_i, W_i)$ is dominant
at the first order of $p_j$.  To apply
induction on the exceptional divisor
over the point $p_j$ we have
to study pencils with possibly non
connected fibers. This is done in Lemma
\ref{lem:connect}. With this and
induction hypothesis we know that the
pencil induced on the exceptional
divisor has base locus of type
$\Phi_1$. Hence we may apply
induction  and find a hyperplane $H$ such that
$\L_i$ restricted to $H$ is dominant.

Finally, we use Lemma \ref{fixed} in
order to show that we can in fact
exclude
the presence of fixed divisors on the
restricted linear systems, so that we
can really
infer properties of $(L_i, W_i)$ from
properties of the restriction to some
hyperplane $H$.

We now prove the above mentioned Lemmata.
Let us fix a pencil $\L_i$, without fixed components, together with
the usual diagram
$$\xymatrix{
\overline{M}_{0,n}\ar[d]_{f_i}\ar[rr]_{f}&&\P^1\ar[d]^{\iso}\\
\P^{n-3}\ar@{.>}[rr]_{\pi^i}&&\P^{1}}
$$
and notation. Let $p_j\in\K\cap\Bl\L_i$
be a point
and $\epsilon_j: Y_j \to \P^{n-3}$  the blow-up
of $p_j$ with exceptional divisor $E_j$.
 Let
$\L_{i,Y_j} = \epsilon_j^{*}\L_i -m_jE_j$ be
the strict transform of $\L_i$, for some
positive $m_j$.

\begin{lemma}
The linear system $\L_i=(L_i, W_i)$
is dominant at the first order of
$p_j$ and for any $A_1,A_2 \in
\L_i$ we have
$$\mult_{p_j}A_1=\mult_{p_j}A_2.$$
Let $H=H^{i\vee}_{hk}$ be a vital
hyperplane containing $p_j$ and
 $A\in\L_i$ a general element. Then we have
$$\mult_{p_j}A=\mult_{p_j}A_{|H} $$
\label{step1}
\end{lemma}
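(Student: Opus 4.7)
The plan is to derive all three statements from Lemma~\ref{lem:bs} applied to the blow-up $\epsilon_j:Y_j\to\P^{n-3}$ at $p_j$. Set $m_j:=\mult_{p_j}A$ for $A\in\L_i$ general, so the strict transform is $\L_{i,Y_j}=(\epsilon_j^*L_i-m_jE_j,W_i^Y)$, and for a general $A$ the strict transform $A_Y$ restricts on $E_j\cong\P^{n-4}$ to a nonzero divisor of degree $m_j$, namely the projectivized tangent cone of $A$ at $p_j$.

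For the first two assertions I would analyze the restriction map
$$r:W_i\longrightarrow H^0\bigl(E_j,(\epsilon_j^*L_i-m_jE_j)|_{E_j}\bigr).$$
An element $B\in W_i$ lies in $\ker r$ precisely when its strict transform, viewed as a section of $\epsilon_j^*L_i-m_jE_j$, vanishes on $E_j$, equivalently when $\mult_{p_j}B>m_j$. Since generic elements restrict to nonzero tangent cones, the image of $r$ is nonzero. If the image were one-dimensional, then $\L_{i,Y_j|E_j}$ would consist of a single divisor up to scalar, which is then a fixed component of itself and contradicts Lemma~\ref{lem:bs}. Hence $r$ is injective, which is exactly the first condition of dominance at the first order; the second condition, no fixed divisor on $E_j$, is Lemma~\ref{lem:bs} itself. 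Injectivity of $r$ also says no element of $\L_i$ has multiplicity strictly greater than $m_j$ at $p_j$, so $\mult_{p_j}A_1=\mult_{p_j}A_2=m_j$ for every pair $A_1,A_2\in\L_i$.

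For the last statement, fix a vital hyperplane $H=H^{i\vee}_{hk}$ containing $p_j$ (so $j\neq h,k$). Choosing local coordinates at $p_j$ in which $H=\{x_1=0\}$ and writing a general $A\in\L_i$ locally as $f=f_{m_j}+f_{m_j+1}+\cdots$, one has $\mult_{p_j}A|_H=m_j$ if and only if $x_1\nmid f_{m_j}$, equivalently if and only if the projective hyperplane $\P(T_{p_j}H)\subset E_j$ is not a component of the tangent cone $A_Y|_{E_j}$. If instead $\P(T_{p_j}H)\subset A_Y|_{E_j}$ for the general element of the pencil, then $\P(T_{p_j}H)$ would be a fixed component of $\L_{i,Y_j|E_j}$, contradicting Lemma~\ref{lem:bs}. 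Thus for $A$ general $\mult_{p_j}A|_H=\mult_{p_j}A=m_j$.

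The main obstacle, such as it is, lies in the dictionary between multiplicity phenomena on $\P^{n-3}$ and the projective geometry of the tangent cone on $E_j$: one must verify that both a one-dimensional image of $r$ and the systematic inclusion $\P(T_{p_j}H)\subset A_Y|_{E_j}$ genuinely produce fixed components in the sense excluded by Lemma~\ref{lem:bs}. Once this translation is in place, the lemma follows without further computation.
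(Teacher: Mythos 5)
Your argument is correct and takes essentially the same route as the paper's: both assertions are deduced from Lemma~\ref{lem:bs}, first by noting that a one-dimensional image of the restriction to $E_j$ would force a fixed component of $\L_{i,Y_j|E_j}$ (giving injectivity and constancy of $\mult_{p_j}$), and then by observing that a drop of multiplicity along $H$ for the general element would put the divisor $E_j\cap H_{Y_j}$ into the base locus on $E_j$, which Lemma~\ref{lem:bs} forbids. Your write-up merely makes explicit, via tangent cones and local coordinates, the steps the paper leaves implicit.
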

\begin{proof}
In the above notation we know, by
Lemma \ref{lem:bs}, that
$\L_{i,Y_j|E_j}$ has not fixed
components. Hence
 the image of the pullback map
$\epsilon_j^{*}: W_i \to H^0(E_j,\L_{i,Y_j|E_j})$
is not one dimensional. Since $\dim
W_i=2$ we conclude that
$\epsilon_j^{*}$ is injective as
required. The injectivity of
$\epsilon_j^{*}$ forces every element in
$\L_i$ to have the same multiplicity at
$p_j$.

Let $H_{Y_j}$ be the strict transform of
$H$ on $Y_j$. Then again by Lemma
\ref{lem:bs} we know that
$\Bl\L_{i,Y_j}\not\supset E\cap
H_{Y_j}$. Therefore the general element
$A\in\L_i$ satisfies
$\mult_{p_j}A=\mult_{p_j}A_{|H}$.
\end{proof}

\begin{lemma}
Let $H=V^i_I\subset \P^{n-3}$
be a vital hyperplane such that $p_j \in H$.
If $ f(E_{i,I})$ is a point and if $H_j$ is
the strict transform of $H$ under $\epsilon_j$,
then $L_{i,Y_j|E_j}$ is trivial along $H_j \cap E_j$.
\label{step2}
\end{lemma}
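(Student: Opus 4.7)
The plan is to exploit the hypothesis that $f$ contracts $E_{i,I}$ to a point in order to force the pencil $\L$ to become trivial along $E_{i,I}$ on $\Mo{n}$, and then to transport this triviality down to $Y_j$ via the birational morphism $g:\Mo{n}\to Y_j$ factoring $f_i=\epsilon_j\circ g$.

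First, I would observe that since $f$ contracts the connected divisor $E_{i,I}$ to a single point $q\in\P^{1}$, the line bundle $L=f^{*}\O_{\P^{1}}(1)$ restricts trivially to $E_{i,I}$, and every section $\sigma\in V\subset H^{0}(\Mo{n},L)$ becomes a global constant in $H^{0}(E_{i,I},\O_{E_{i,I}})=\C$. Since any two elements of $\C$ are proportional, all members of the pencil $\L$ restrict to mutually proportional sections on $E_{i,I}$; equivalently, for two general $A_1,A_2\in\L$ (fibres over points distinct from $q$), one has $A_k\cap E_{i,I}=\emptyset$, and the corresponding $\sigma_1,\sigma_2$ are nonzero (hence proportional) constants on $E_{i,I}$.

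Next, since $E_{i,I}$ is the $f_i$-strict transform of the hyperplane $H=V^{i}_{I}$ and $H_j$ is the $\epsilon_j$-strict transform of $H$, the morphism $g$ restricts to a birational morphism $g|_{E_{i,I}}:E_{i,I}\to H_j$ given by the remaining Kapranov blow-ups of vital subspaces lying in $H_j$. For each $\bar\sigma^{Y}\in W_{i}^{Y}$ I write $g^{*}\bar\sigma^{Y}=\sigma\cdot\prod_{\alpha}s_{\alpha}^{m_{\alpha}}$, where $\sigma\in V$ is the strict transform, the $s_{\alpha}$ are local equations of the $g$-exceptional divisors $E_{\alpha}\ne E_{i,I}$, and the multiplicities $m_{\alpha}$ depend only on $\L_{i,Y_j}$ (as it has no fixed components). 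Restricting both sides to $E_{i,I}$, the factor $\prod s_{\alpha}^{m_{\alpha}}|_{E_{i,I}}$ is independent of $\bar\sigma^{Y}$, while $\sigma|_{E_{i,I}}$ is a constant depending linearly on $\bar\sigma^{Y}$. Hence $(g|_{E_{i,I}})^{*}(\L_{i,Y_j|H_j})$ is a trivial pencil on $E_{i,I}$, and the dominance of $g|_{E_{i,I}}$ forces $\L_{i,Y_j|H_j}$ itself to be trivial on $H_j$. Restricting further to $H_j\cap E_j\subset H_j$ then gives that $L_{i,Y_j|E_j}$ is trivial along $H_j\cap E_j$, as claimed.

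The main point to verify carefully is the identification $g(E_{i,I})=H_j$ together with birationality of $g|_{E_{i,I}}$: the Kapranov sequence of blow-ups comprising $f_i$ begins with the $n-1$ Kapranov points, of which $p_j$ is one, so after extracting $\epsilon_j$ the remaining blow-ups constituting $g$ act on strict transforms of vital subspaces of $Y_j$, and those lying in $H_j$ are exactly the ones that rebuild $E_{i,I}\cong\Mo{|I|+1}\times\Mo{|I^{*}|+1}$ from $H_j$ via the Kapranov recipe. Granted this geometric identification, what remains is the linear-system calculation above, which uses only that $\L_{i,Y_j}$ is free of fixed components.
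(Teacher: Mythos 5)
Your argument is correct and takes essentially the same route as the paper, whose own proof is just the two-line observation that $f$ resolves the indeterminacy of $\pi^i$ and that $f_i$ factors through $\epsilon_j$. You have merely made explicit the intended mechanism: triviality of $L=f^{*}\O_{\P^1}(1)$ on the contracted divisor $E_{i,I}$ descends through the birational morphism $g\colon\Mo{n}\to Y_j$ (under which $E_{i,I}$ maps birationally onto $H_j$) to give triviality of the strict-transform pencil on $H_j$, and hence on $H_j\cap E_j$.
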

\begin{proof}
The morphism $f$ is a resolution of indeterminacies of $\pi^i$ and $f_i$
factors through $\epsilon_j$. Then we have the result.
\end{proof}

In Section \ref{sec:bir} we proved that
every forgetful map onto $\P^1$ induces
an $M_\K$-linear system of degree at
most 2 with base locus of type $\Phi_1$.
One cannot expect that all
morphisms to $\P^1$ have bounded
degree. On the other hand, under
suitable hypothesis, the base locus
of $M_\K$-linear systems is unaffected by
connectedness of fibers.

\begin{lemma}\label{lem:connect} Assume
  that every dominant morphism
  \hbox{$g:\Mo{n}\to \P^1$} with connected
  fibers is a forgetful map. Then
  $\Bl\L_i$ is of type
  $\Phi_1$.
If moreover $n\geq 7$  there are vital points $p_j$  satisfying  $\mult_{p_j}\L_i=\deg\L_i$.
\end{lemma}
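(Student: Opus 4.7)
The plan is to apply Stein factorization to the morphism $f\colon\Mo{n}\to\P^1$ in the usual diagram. Writing $f = h\circ g$ with $g\colon\Mo{n}\to C$ of connected fibers and $h\colon C\to\P^1$ finite, the rationality of $\Mo{n}$ forces $C\cong\P^1$. By the hypothesis of the lemma, $g$ is then a forgetful map $\phi_J\colon\Mo{n}\to\Mo{4}\cong\P^1$ for some $J\subset\{1,\dots,n\}$ with $|J|=n-4$. The task therefore reduces to comparing $\L_i$ with the $M_\K$-linear system $\L_g$ on $\P^{n-3}$ attached to $g$ via $f_i$, whose base locus is explicitly described in Section \ref{sec:bir}.

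By that description, $\Bl\L_g$ is of type $\Phi_1$: it is the codimension-two vital space $V^i_J$ when $i\notin J$, and the cone over four points with vertex $V^i_{J\setminus\{i\}}$ when $i\in J$. Write $h = [P_0\colon P_1]$ with $(P_0,P_1)$ a coprime pair of homogeneous polynomials of degree $d:=\deg h$, and let $s_0,s_1$ generate $\L_g$. Then $\L$ is spanned by $f^*x_0$ and $f^*x_1$, which pushed down to $\P^{n-3}$ are $P_0(s_0,s_1)$ and $P_1(s_0,s_1)$; that is, $\L_i = \langle P_0(s_0,s_1),P_1(s_0,s_1)\rangle$. Since $P_0,P_1$ share no common root in $\P^1$, a point of $\P^{n-3}$ lies in $\Bl\L_i$ if and only if $s_0$ and $s_1$ both vanish there; equivalently $\Bl\L_i=\Bl\L_g$ set-theoretically. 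This proves the first claim.

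For the multiplicity claim, the key observation is that any degree-$d$ homogeneous polynomial in $s_0,s_1$ vanishes at a point $p$ to order equal to $d$ times the common order of vanishing of $s_0,s_1$ at $p$, provided the two generators vanish to the same order transversely to the base locus. If $i\notin J$, the generators $s_0,s_1$ are linear forms vanishing to order one on $V^i_J$, whence $\mult_{p_k}\L_i=d=\deg\L_i$ for every Kapranov point $p_k$ with $k\in J$; since $|J|=n-4\geq 3$ when $n\geq 7$, such points exist. If $i\in J$, the quadric generators $s_0,s_1$ vanish to order two at each point of the cone vertex $S=V^i_{J\setminus\{i\}}$, so $\mult_{p_k}\L_i=2d=\deg\L_i$ for every Kapranov point $p_k$ with $k\in J\setminus\{i\}$, of which there are $n-5\geq 2$ when $n\geq 7$.

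The main technical point is verifying that composition with $h$ preserves the set-theoretic base locus and scales multiplicities cleanly by $d$, without introducing new base components. This is guaranteed by the absence of fixed components in $\L_i$ (which prevents any $P_j(s_0,s_1)$ from vanishing identically on $\P^{n-3}$) together with the coprimality of $(P_0,P_1)$ inherent in $h$ being a morphism. Once these are in hand, the counting of Kapranov points in $V^i_J$ or in $S$ is straightforward.
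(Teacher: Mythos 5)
Your proof is correct and follows essentially the same route as the paper's: Stein factorization together with the hypothesis reduces $f$ to a finite cover of a forgetful map onto $\Mo{4}$, and the base locus is then read off from the explicit description of forgetful pencils in Section \ref{sec:bir}. The only differences are cosmetic: the paper reduces to a Kapranov index outside the forgotten set and invokes a standard Cremona transformation to cover the remaining case, whereas you treat the cases $i\notin J$ and $i\in J$ directly and spell out the (correct) coprimality argument showing that composition with the degree-$d$ cover $h$ changes neither the set-theoretic base locus nor the validity of $\mult_{p_j}\L_i=\deg\L_i$.
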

\begin{proof}
Let $h:\Mo{n}\to C$ be the Stein
factorization of $f$. Let
$\nu:\tilde{C}\to C$ be the
normalization. Then there is a unique
map
$f':\Mo{n}\to\tilde{C}$ such that
$h=\nu\circ f'$.
The variety $\Mo{n}$ is rational,
therefore $\tilde{C}\iso\P^1$ and
$|f^*\O(1)|\subset |f^{\prime*}\O(\gamma)|$ for
some integer $\gamma$.
By
hypothesis $f'$ is a forgetful map
therefore we may choose $i$ in such a
way that
$|f_{i*}f^{\prime*}\O(1)|\subset|\O_{\\P^{n-3}}(1)|$
satisfies
$$\Bl|f_{i*}f^{\prime*}\O(1)|=V^j_I,$$
where $V^j_I$ is
a codimension two irreducible vital
space. Then the elements
in the linear system
$\L_i$ are union of $\gamma$ hyperplanes
containing $V^j_I$ and
$$\Bl\L_i:=\Bl|f_{i*}f^{*}\O(1)|=\Bl|f_{i*}f^{\prime*}\O(1)|=V^j_I.$$ To conclude it is enough
to apply standard Cremona
transformations to this configuration as
described in Section \ref{sec:bir}.
In particular if $n\geq 7$ all linear systems of type $\Phi_1$ are cones with non empty vertex and therefore there is at least a point $p_j$ with
$\mult_{p_j}\L_i=\deg\L_i$.
\end{proof}

We conclude this technical part taking
into account the eventual fixed
divisors.

\begin{lemma}
Let $H=H^{i\vee}_{h,k}$ be a vital hyperplane in $\P^{n-3}$, with $j\neq h,k$.
Assume that $\L_i=(L_i, W_i)$ has a dominant restriction to $H$.
Assume that $F$ is the fixed divisor of the restricted system
$\L_{i|H}$ and that $p_j \not\in F$. Then
$$F= \langle p_l | l \neq h,k,j \rangle =V^{i}_{\{i,h,k,j\}^*}.$$
\label{fixed}
\end{lemma}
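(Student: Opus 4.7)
The plan is to combine the $M_\K$-structure of $\L_i$ with a short dimension count in order to force the support of the fixed divisor $F$ to be a unique codimension-$2$ vital subspace of $\P^{n-3}$ lying inside $H$, and then read off which one using the assumption $p_j\notin F$.

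First I would check that $F\subseteq\Bl\L_i$. For every $A\in\L_i$ one has $A\supseteq A|_H\supseteq F$, so $F\subseteq\bigcap_{A\in\L_i}A=\Bl\L_i$. Next I would extract the structure of $\Bl\L_i$ from the $M_\K$-hypothesis: by definition the strict transform $f_{i*}^{-1}\L_i$ is base point free on $\Mo{n}$, and the Kapranov map $f_i$ is an iterated blow-up of vital linear subspaces and hence an isomorphism away from their union. Consequently any point of $\Bl\L_i$ must lie on some vital subspace of $\P^{n-3}$. Since $\L_i$ is a pencil without fixed divisors, $\Bl\L_i$ is of pure codimension two, and its codimension-$2$ components are therefore codimension-$2$ vital subspaces, i.e.\ spans of $n-4$ Kapranov points.

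Now the identification of $F$ becomes combinatorial. As a divisor on $H\cong\P^{n-4}$, $F$ is of codimension two in $\P^{n-3}$; by the previous step its support is a union of codimension-$2$ vital subspaces of $\P^{n-3}$ contained in $H=V^i_J$, where $J=\{1,\ldots,n\}\setminus\{i,h,k\}$. The codimension-$2$ vital subspaces sitting inside $H$ are exactly the $V^i_{J\setminus\{l\}}$ for $l\in J$, obtained by dropping one of the $n-3$ spanning Kapranov points of $H$. Such a subspace contains $p_j$ if and only if $l\neq j$, so the hypothesis $p_j\notin F$ singles out $V^i_{J\setminus\{j\}}=V^i_{\{i,h,k,j\}^*}$ as the unique possible support of $F$.

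The main subtlety I foresee is verifying that this component appears in $F$ with multiplicity exactly one rather than with some higher multiplicity; I expect this to follow from a direct local calculation on a general transversal to $V^i_{J\setminus\{j\}}$ inside $H$, comparing the multiplicity of $\L_i$ along $V^i_{J\setminus\{j\}}$ in $\P^{n-3}$ with that of $\L_{i|H}$ along the same subspace in $H$, and using the $M_\K$-condition together with the dominance of the restriction to exclude extra vanishing. Apart from this multiplicity bookkeeping, the argument is a purely qualitative combination of the $M_\K$-constraint with the combinatorics of vital subspaces.
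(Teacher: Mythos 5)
Your argument is correct and follows essentially the same route as the paper: both identify the components of $F$ as codimension-two components of $\Bl\L_i$, use Kapranov's construction to see that such components must be codimension-two vital subspaces, and then combine $F\subset H$ (so $p_h,p_k$ cannot occur among the spanning points) with $p_j\notin F$ to single out $V^i_{\{i,h,k,j\}^*}$. The multiplicity question you flag is not addressed in the paper's proof either, which --- like the later applications of the lemma --- only ever uses the support of $F$.
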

\begin{proof}
The fixed divisor  $F\subset H$ is in the base
locus of $(L_i, W_i)$. By hypothesis
$\L_i$ has not fixed divisors. Then the support of $F$ must be an irreducible component
of $\Bl\L_i$. In particular $F$
does not contain $p_h, p_k$, and therefore
cannot intersect the line
$\langle p_h,p_k\rangle=V^{i}_{h,k}$. By hypothesis
we have $p_j \not\in F$. Hence the only
possibility left is  $F= \langle p_l | l \neq h,k,j \rangle $.
\end{proof}

Our inductive argument starts with the classical case of $\Mo{5}$. The next step is a special case of \cite{FG} study of the cone of effective curves of $\Mo{6}$. From our point of view the $n=6$ case is a bit more complicate because in this case it is not true that
there is always a point with $\mult_{p_j}\L_i=\deg\L_i$. The pencil ${\mathcal C}^i_1$ is a pencil
of plane conics. Nonetheless we prefer to prove the $n=6$ case with
our techniques to show a slightly more complicate application of Cremona transformations $\omega^\K_h$.
Suppose $\L=(L, V)$ is a pencil on $\Mo{6}$.
Choose a Kapranov map $f_i$ and the
induced pencil $\L_i=(L_i, W_i)$ on
$\P^3$. From Lemma \ref{step1}
we find a point $p_j \in \K\cap \Bl\L_i$ such that
$\L_i$ dominates $p_j$ at first order.
Let $E_j$ be the exceptional divisor of
the blow-up of $p_j$. This is a plane
with a natural Kapranov set induced by
the lines in $\P^3$ joining $p_j$ with
any other point of $\K \subset \P^3$.
Furthermore the strict transform
$\L_{i,Y_j}$ is in a natural way an $M_\K$-linear system. We may apply
Lemma \ref{lem:connect} in order to
deduce
that its base locus is of type $\Phi_1$.
After possibly switching the Kapranov
map, see Remark
\ref{rem:kcre},
we may assume that
$\Bl\L_{i,Y_j}$ consists of one point, say $p_l$, and that
fibers of the rational map to $\P^1$ are
sets of lines
through $p_l$. This,
together with Lemma \ref{step2} yields
that if $ H \subset \P^3$ is a vital
hyperplane containing $p_j$ and not
containing $p_l$,
the restriction of $(L_i, W_i)$ to $H$
is dominant.
We may assume that $j=2$, $l=1$, and $i=6$.
Let $H_1=H^{6\vee}_{1,3}$ and $H_2=H^{6\vee}_{1,4}$ be vital hyperplanes. By construction the restriction of $\L_6$ to $H_s$ is dominant, for $s=1,2$.

\begin{claim} We may assume that 
 $(L_{6|H_s},
W_6)$ is free of fixed divisors for $s=1,2$.
\end{claim}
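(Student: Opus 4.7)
The plan is to use Lemma \ref{fixed} to pin down the only possible shape of a fixed divisor on $\L_{6|H_s}$, and then to derive a contradiction with the arrangement $\Bl\L_{6,Y_2}=\{p_1\}$ established just before the claim.

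First I would invoke Lemma \ref{fixed} with $i=6$, $j=2$, and $(h,k)=(1,3)$ for $s=1$, respectively $(1,4)$ for $s=2$. The restrictions of $\L_6$ to $H_1$ and $H_2$ are already known to be dominant by the construction preceding the claim, and the hypothesis $p_2\notin F$ is satisfied because $\L_6$ is dominant at the first order of $p_2$: by Lemma \ref{step1}, every element of $\L_6$ has the same multiplicity at $p_2$, so no varying divisor of the restriction can have $p_2$ as an extra component. The lemma then forces the only possible fixed divisor of $\L_{6|H_1}$ to be the vital line $F_1=\langle p_4,p_5\rangle$, and the only possible fixed divisor of $\L_{6|H_2}$ to be $F_2=\langle p_3,p_5\rangle$; note that both lines avoid $p_2$.

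Next, suppose for contradiction that $\L_{6|H_1}$ admits $F_1$ as a fixed divisor. Since $\L_{6|H_1}=\{A\cap H_1:A\in\L_6\}$, every element of $\L_6$ must contain $F_1$, so $F_1\subseteq\Bl\L_6$. As $p_2\notin F_1$, the blow-up $\epsilon_2\colon Y_2\to\P^3$ leaves $F_1$ untouched, and its strict transform remains a one-dimensional subvariety of $\Bl\L_{6,Y_2}$. This contradicts the fact, established immediately before the claim, that $\Bl\L_{6,Y_2}$ is the single point $p_1$. The same argument excludes $F_2$ as a fixed divisor of $\L_{6|H_2}$, yielding the claim.

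The main potential obstacle is verifying the hypotheses of Lemma \ref{fixed}, but both are handled swiftly: dominance is built into the construction of $H_1,H_2$, and $p_2\notin F$ is immediate from the only two candidate lines. Consequently, no additional Cremona manipulation is needed beyond the one used to reduce $\Bl\L_{6,Y_2}$ to a single point, and the reduction encoded in the phrase ``we may assume'' is really a direct impossibility rather than a case split.
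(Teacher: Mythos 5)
Your identification of the candidate fixed divisors via Lemma \ref{fixed} ($F_1=V^6_{4,5}$ for $H_1$ and $F_2=V^6_{3,5}$ for $H_2$) agrees with the paper, but the final contradiction is not valid, and it is the heart of the argument. The phrase ``$\Bl\L_{6,Y_2}$ consists of one point'' in the text preceding the claim cannot be read literally: $\L_6$ is a pencil of surfaces in $\P^3$ without fixed components, so its base locus is purely one-dimensional, and so is the base locus of the strict transform on $Y_2$. What is a single point is the trace $\Bl\L_{6,Y_2}\cap E_2$, i.e.\ the base locus of the induced pencil on the exceptional plane $E_2$ (this is what ``fibers of the rational map to $\P^1$ are sets of lines through $p_l$'' refers to). Since $F_1=\langle p_4,p_5\rangle$ does not pass through $p_2$, its strict transform is disjoint from $E_2$, so the inclusion $F_1\subseteq\Bl\L_6$ contradicts nothing. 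Indeed a vital line in $\Bl\L_6$ is the generic situation, not an impossibility: base loci of type $\Phi_1$ are built out of exactly such vital linear spaces, and the forgetful maps themselves produce pencils whose base locus \emph{is} a codimension-two vital space. A single fixed divisor on a single $H_s$ therefore cannot be excluded the way you propose.

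This is why the paper's proof is structured differently: it brings in a third hyperplane $H_3=H^{6\vee}_{1,5}$ and assumes that (at least) two of the three restrictions have fixed divisors. Two of the resulting lines among $V^6_{4,5},V^6_{3,5},V^6_{3,4}$ meet at a Kapranov point $p_h$ with $h\in\{3,4,5\}$, producing two distinct base points on the exceptional plane $E_h$; by Lemma \ref{lem:connect} the base locus there is of type $\Phi_1$, so two points force the whole induced Kapranov set $\K_h$ into $\Bl\L_{6,Y_h}\cap E_h$, and propagating this yields $V^6_{2,h}\subseteq\Bl\L_6$ --- a vital line through $p_2$ in a direction different from $V^6_{1,2}$, which \emph{does} contradict the single-point structure on $E_2$. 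The conclusion is only that at most one of $H_1,H_2,H_3$ can carry a fixed divisor, whence the relabelling hidden in ``we may assume''; your reading of the reduction as ``a direct impossibility rather than a case split'' is therefore also incorrect. A secondary issue: your verification of the hypothesis $p_2\notin F$ of Lemma \ref{fixed} is partly circular (you cannot use the lemma's conclusion to check its hypothesis) and partly unjustified (equal multiplicities at $p_2$ do not prevent the fixed part of a restriction from passing through $p_2$); the correct reason is again the single-point structure of $\Bl\L_{6,Y_2}\cap E_2$, since a fixed divisor through $p_2$ inside $H_s$ would contribute a base direction in $H_{s,Y_2}\cap E_2$, which misses the direction of $V^6_{1,2}$ because $p_1\notin H_s$.
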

\begin{proof}[Proof of the Claim]
 Consider
$H_1=H^{6\vee}_{1,3}$,
$H_2=H^{6\vee}_{1,4}$, and
$H_3=H^{6\vee}_{1,5}$. Then the restriction of $\L_6$ to $H_s$ is dominant, for $s=1,2,3$.
Assume that for any pair of $H_s$, the linear system $\L_{6|H_s}$ has fixed divisors, say $F_s\subset H_s$.
By construction $\Bl\L_{6,Y_2}$ is a single point corresponding to the line $V^6_{1,2}$, hence $p_2\not\in
F_s$. Then  Lemma
\ref{fixed} yields
\begin{equation}
\Bl\L_6\supseteq V^6_{4,5}\cup V^6_{3,5}\cup V^6_{3,4}.
\label{eq:6}
\end{equation}
By Lemma \ref{step1} $\L_6$ is
 dominant at the first order at any Kapranov point
and by equation (\ref{eq:6})
there are at least two points in
$\Bl\L_{6,Y_h}\cap E_h$, for $h=3,4,5$. Let $\K_h$ be
the Kapranov set induced on $E_h$, then via Lemma
\ref{lem:connect} we conclude that
\begin{equation}
\label{eq:plane}
\Bl\L_{6,Y_h}\supset \K_h\ \ \mbox{ for $h=3,4,5$.}
\end{equation}
Let $\M$ be the movable part of the linear system $\L_{6|H_s}$.
Let $\M_{h,Y_h}$ be the strict transform of $\M$ on the blow up $Y_h$. Then by Lemma \ref{lem:connect}
$\Bl\M$ is of type $\Phi_1$, therefore $\M_{h,Y_h}\cap E_h=\emptyset$.
This together with equation (\ref{eq:plane}) yields that every vital line contained in $H_s$ and passing through $p_h$
is a fixed component of $\L_6$. In particular we derive the contradiction
$$V^6_{2,h}\subseteq\Bl\L_6$$
\end{proof}

If $\Bl\L_{6}\cap H_s$ is a single point $p_h$, then $\mult_{p_h}\L_{6|H}=\deg\L_{6|H}$. Hence by Lemma \ref{lem:connect}, and \ref{step1} we know that
$$\mult_{p_h}\L_6=\mult_{p_h}\L_{6|H}=\deg \L_{6|H}=\deg\L_6$$ so that $$\mult_{p_h}\L_6=\deg\L_6$$
and we conclude by
Proposition \ref{pro:ind} that
$f$ factors via the forgetful map $\phi_h$. Then every map, with connected fibers, from $\Mo{5}$ is a forgetful map and we conclude that
$f$ itself is a forgetful map.

Assume that $\Bl\L_{6}\cap H_s$ is the full Kapranov  set for $s=1,2$. Then consider the Cremona Transformation $\omega^\K_5$.
Let $H_s^{\prime}=\omega^\K_5(H_s)$, for $s=1,2$.

\begin{claim} We may assume that the restricted linear system $\L_{5|H_1^{\prime}}$ has not fixed divisors.
\end{claim}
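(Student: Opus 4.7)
The plan is to re-run the argument of the first Claim verbatim in the picture after the Cremona $\omega^\K_5$. Since $\L_5$ is again an $M_{\K_5}$-linear system on $\P^3$ without fixed components, inducing the same map $f$ with connected fibers, the analysis of Lemmas \ref{step1}--\ref{lem:connect} applies directly: it supplies a Kapranov point $p_{j'}\in\K_5\cap\Bl\L_5$ at which $\L_5$ is dominant at first order, together with (after a further Cremona on $E_{j'}$ if needed, see Remark \ref{rem:kcre}) a single base point $p_{l'}$ of $\L_{5,Y_{j'}}$ on $E_{j'}$ corresponding to the vital line $V^5_{j',l'}$. By the symmetry in the labels of $\K_5$ I may relabel so that $j'=2$ and $l'=1$; then $H_1^{\prime}=H^{5\vee}_{1,3}$ contains $p_2$ and avoids $p_1$, and Lemma \ref{step2} ensures that the restriction of $\L_5$ to $H_1^{\prime}$ is dominant.

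Next I would argue by contradiction exactly as in the proof of the first Claim. Consider the three vital hyperplanes $H_s^{\prime}=H^{5\vee}_{1,h}$ for $h\in\{3,4,6\}$, each containing $p_2$ and avoiding $p_1$, so that each restriction is dominant. Assume that every $\L_{5|H_s^{\prime}}$ has a fixed divisor $F_s$. Because $\Bl\L_{5,Y_2}$ is supported at the single direction of $V^5_{1,2}$ on $E_2$, and because $p_1\notin H_s^{\prime}$ forces this direction to be transverse to $T_{p_2}H_s^{\prime}$, any component of $F_s$ through $p_2$ would generate a second base point on $E_2$, a contradiction. Hence $p_2\notin F_s$, and Lemma \ref{fixed} identifies
$$F_1=V^5_{4,6},\qquad F_2=V^5_{3,6},\qquad F_3=V^5_{3,4},$$
three vital lines contained in $\Bl\L_5$.

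Finally I would close the argument by mirroring the contradiction in the first Claim. Each of $p_3,p_4,p_6$ lies on two of these lines, so for $h\in\{3,4,6\}$ the intersection $\Bl\L_{5,Y_h}\cap E_h$ contains at least two Kapranov points of the natural $\K_h$ on $E_h$; since a $\Phi_1$ base locus in the plane $E_h$ is either a single point or a full Kapranov set, Lemma \ref{lem:connect} forces $\Bl\L_{5,Y_h}\supset\K_h$. Comparing with the movable part $\M$ of $\L_{5|H_s^{\prime}}$, which is itself of type $\Phi_1$ and whose strict transform on $Y_h$ is disjoint from $E_h$, every vital line in $H_s^{\prime}$ through $p_h$ must be a fixed component of $\L_5$, yielding $V^5_{2,h}\subseteq\Bl\L_5$. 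This contradicts the fact that $\Bl\L_{5,Y_2}$ is supported only at the $V^5_{1,2}$ direction. Therefore at least one of the three restrictions is free of fixed divisors, and after relabeling the indices $\{3,4,6\}$ I may take this restriction to be $\L_{5|H_1^{\prime}}$. The main obstacle is to verify that the preliminary analysis of $\L_5$ (i.e.~the choice of $p_{j'}$ and $p_{l'}$) is compatible with the specific plane $H_1^{\prime}$ singled out by the Cremona; this works because the combinatorial configuration---three vital hyperplanes through a common Kapranov point, all avoiding another---is preserved by $\omega^\K_5$, and the relabeling inside $\K_5$ is free.
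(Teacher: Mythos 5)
Your argument does not prove the statement that is actually needed. The Claim concerns the \emph{specific} plane $H_1'=\omega^\K_5(H^{6\vee}_{1,3})$ pinned down by the first half of the $n=6$ discussion, and the only reason $H_1'$ and $H_2'$ are useful in the step that follows is that on them the moving part of $\L_{5|H_s'}$ is already known to be a pencil of lines through $p_6$: this comes from the standing hypothesis that $\Bl\L_6\cap H_s$ is the full Kapranov set of $H_s$, pushed through the planar Cremona $\omega^\K_{5|H_s}$. Your opening move --- rerun Lemmata \ref{step1}--\ref{lem:connect} for $\L_5$ from scratch, obtain fresh points $p_{j'},p_{l'}$, and then ``relabel so that $j'=2$, $l'=1$'' --- is not available: all six labels are already committed ($j=2$, $l=1$, $i=6$, and $H_1,H_2$ chosen accordingly), so your relabelling establishes fixed-divisor-freeness for some vital plane of the $\K_5$-model that need not coincide with $H_1'$ or $H_2'$, and on which the pencil-of-lines-through-$p_6$ structure is unknown. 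You flag this compatibility problem in your last sentence, but asserting that ``the combinatorial configuration is preserved by $\omega^\K_5$ and the relabelling inside $\K_5$ is free'' is precisely the missing content, not an argument for it. Note also that your third plane $H^{5\vee}_{1,6}=V^5_{2,3,4}$ is the exceptional hyperplane of $\omega^\K_5$ over $p_1$, so a conclusion landing there would in any case be useless for the next step.

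There is a second, related gap: your exclusion of $p_2$ from the fixed divisors, and hence the list $F_1=V^5_{4,6}$, $F_2=V^5_{3,6}$, $F_3=V^5_{3,4}$, rests on the claim that $\Bl\L_{5,Y_2}\cap E_2$ is the single point corresponding to $V^5_{1,2}$. Nothing proved so far gives this: the point $p_2\in\K_5$ is $f_5(E_{2,5})$, whereas the point at which the single-point structure was established is $p_2\in\K$, i.e.\ $f_6(E_{2,6})$ --- different boundary divisors, so the information does not transfer. The paper argues in the opposite direction: it transports what is already known about $\L_6$ through $\omega^\K_5$. The lines of $H_s'$ through $p_6$ correspond to vital lines of the $f_6$-model contained in $H_s$, which are already excluded from $\Bl\L_6$ (first-order dominance at every Kapranov point together with the first Claim), so by Lemma \ref{fixed} the only surviving candidate for the fixed divisor of $\L_{5|H_s'}$ is the line $V^5_{2,5-s}$ \emph{through} $p_2$. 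Two such lines give two points of $\Bl\L_{5,Y_2}\cap E_2$, whence $\Bl\L_{5,Y_2}\supset\K_2$ by Lemma \ref{lem:connect} and then $V^5_{2,6}\subset\Bl\L_5$; pulling back through $\omega^\K_5$ this says $V^6_{2,5}\subset\Bl\L_6$, contradicting the established structure of $\Bl\L_{6,Y_2}$. So the contradiction lives at $p_2$ and is obtained from properties of $\L_6$, not from a fresh and unjustified analysis of $\L_5$.
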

\begin{proof}[Proof of the Claim] Assume that $\L_{5|H_s^{\prime}}$ has fixed divisors, for $s=1,2$.
The linear system $\L_6$ is dominant at the first order in any Kapranov point,
 then the only possible fixed divisor of $\L_{5|H^{\prime}_s}$ is
$V^5_{2,5-s}$. This forces, as in the previous Claim,
$$\Bl\L_{5,2}\supset\K_2, $$
where $\K_2$ is the Kapranov set induced on the exceptional divisor $E_2$. Then as before we have
$$\Bl\L_5\supset  V^5_{2,6},$$
a contradiction.
\end{proof}
The restriction $\omega^\K_{5|H_s}$ is a standard Cremona transformation of $\P^2$. Hence the claim shows that
$\mult_{p_6}\L_{5|H_1^{\prime}}=\deg\L_5$, and we conclude as above that $f$ factors through a forgetful map.
This concludes the $n=6$ case. We are ready for the proof of the following:
 
\begin{theorem} Let
  $f:\overline{M}_{0,n}\to
  \P^1\iso\overline{M}_{0,4}$ be a non constant
  morphism with connected fibers. Then $f$ is a forgetful map.
\label{thm:p1}
\end{theorem}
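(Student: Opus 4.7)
The plan is induction on $n$, with the cases $n=5$ and $n=6$ settled above. So assume $n \geq 7$ and that every dominant morphism with connected fibers $\Mo{n-1} \to \P^1$ is forgetful. Fix a Kapranov map $f_i$ and set $\L_i := f_{i*}(f^{*}\O_{\P^{1}}(1))$, an $M_\K$-linear system without fixed divisors. By Proposition \ref{pro:ind} it is enough to exhibit a Kapranov point $p_h \in \K$ with $\mult_{p_h}\L_i = \deg\L_i$.

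Since $\P^{n-3}$ carries no base point free pencil, $\Bl\L_i \neq \emptyset$; possibly after a standard Cremona transformation $\omega^\K_k$ (amounting to switching to a different Kapranov map) we may arrange $\K \cap \Bl\L_i \neq \emptyset$ and pick $p_j$ in it. Lemma \ref{step1} then gives first-order dominance at $p_j$, so the restricted pencil $\L_{i,Y_j|E_j}$ on the exceptional divisor $E_j \cong \P^{n-4}$ is an $M_\K$-linear system without fixed components, with the induced Kapranov structure. By the inductive hypothesis, combined with Stein factorization, Lemma \ref{lem:connect} applies and forces $\Bl\L_{i,Y_j|E_j}$ to be of type $\Phi_1$; after another Cremona transformation on $E_j$ we may assume it is a codimension-two vital linear space. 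Using Lemma \ref{step2}, we then select a vital hyperplane $H = H^{i\vee}_{hk}$ through $p_j$ for which $\L_i$ restricts dominantly to $H$.

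The main obstacle is the possibility that $\L_{i|H}$ acquires a fixed divisor even though $\L_i$ has none. By Lemma \ref{fixed}, any such fixed divisor must equal $V^{i}_{\{i,h,k,j\}^{*}}$, hence forces a vital subspace into $\Bl\L_i$. Running this for several candidate hyperplanes $H$ and exploiting first-order dominance at other Kapranov points (via Lemma \ref{lem:connect} applied on the corresponding exceptional divisors) then forces a vital line into $\Bl\L_i$, contradicting the absence of fixed divisors of $\L_i$. This is the direct higher-dimensional analogue of the two Claims carried out for $n=6$, and it allows, after possibly one more Cremona transformation, the choice of an $H$ with $\L_{i|H}$ dominant and free of fixed divisors. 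The inductive hypothesis applied on the Kapranov $\P^{n-4}$ associated to $H$ then yields a Kapranov point $p_h \in H \cap \K$ with $\mult_{p_h}\L_{i|H} = \deg\L_{i|H}$. By Lemma \ref{step1}, $\mult_{p_h}\L_i = \mult_{p_h}\L_{i|H}$, and since $H$ is not a fixed component of $\L_i$ we have $\deg\L_{i|H} = \deg\L_i$. Therefore $\mult_{p_h}\L_i = \deg\L_i$, and Proposition \ref{pro:ind} completes the induction.
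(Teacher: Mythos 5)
Your proposal follows the paper's proof of Theorem \ref{thm:p1} step for step: induction on $n$, locating a Kapranov point $p_j\in\K\cap\Bl\L_i$ and invoking first-order dominance (Lemma \ref{step1}), using Lemma \ref{lem:connect} to put the base locus on the exceptional divisor $E_j$ in type $\Phi_1$, producing a dominant vital hyperplane $H$ via Lemma \ref{step2}, eliminating fixed divisors of $\L_{i|H}$ via Lemma \ref{fixed}, and closing with Proposition \ref{pro:ind} and the inductive hypothesis applied to the induced map from $\Mo{n-1}$. Up to the fixed-divisor step this is essentially the paper's argument.

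There is, however, one genuine defect, and it sits at the crux of the fixed-divisor elimination. You claim that running Lemma \ref{fixed} over several candidate hyperplanes ``forces a vital line into $\Bl\L_i$, contradicting the absence of fixed divisors of $\L_i$.'' For $n\geq 7$ a vital line in $\P^{n-3}$ has codimension $n-4\geq 3$, so its presence in $\Bl\L_i$ is perfectly compatible with $\L_i$ having no fixed divisor; as stated, no contradiction materializes. (Even in the $n=6$ model you cite, the contradiction $V^6_{2,h}\subseteq\Bl\L_6$ is not with the absence of fixed divisors but with the previously established fact that the only vital line through $p_2$ in the base locus is $V^6_{1,2}$.) The paper's actual argument for general $n$ is different and sharper: if two suitably chosen vital hyperplanes $H_1$, $H_2$ both carry fixed divisors, Lemma \ref{fixed} identifies these as two distinct codimension-two vital linear spaces, both passing through a common Kapranov point $p_{n-1}$; then $\L_n$ is dominant at first order at $p_{n-1}$ by Lemma \ref{step1}, and the trace of $\Bl\L_n$ on the exceptional divisor $E_{n-1}$ acquires two irreducible components meeting in codimension $4$, which is incompatible with the type $\Phi_1$ structure guaranteed by Lemma \ref{lem:connect} (an irreducible codimension-two linear space, or a cone over the base locus of a conic pencil). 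You have all the right ingredients on the table, but the punchline needs to be the failure of the $\Phi_1$ structure on an exceptional divisor, not a fixed-divisor contradiction in $\P^{n-3}$.
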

\begin{proof} We prove the claim by
  induction on $n$. We already discussed
  the $n\leq 6$ case.
  We have
  to prove the result for $n\geq 7$. Let
  $f_n:\overline{M}_{0,n}\to\P^{n-3}$ be
  a Kapranov map with
  $\K=\{p_1,\ldots,p_{n-1}\}$.
Let $\L_n=(L_n,V_n)=f_n(f^*(\O(1)))$ be the associated linear system.
Then the linear system  $\L_n$ is a
pencil of hypersurfaces, without fixed components, say
$\L=\{A_{1},A_{2}\}$, and it is an $M_\K$-linear system.

>From Lemma \ref{step1} we know that there exists $p_1 \in \K\cap\Bl\L_n$
such that $\L_n$ is dominant at the first order of $p_1$.
>From Lemma \ref{step1} we get:
\begin{equation}
\label{eq:mult}
m=\mult_{p_1}A_{1}=\mult_{p_1}A_{2}
\mbox{ and }
\mult_{p_1}\L=\mult_{p_1}\L_{|H^{n\vee}_{h,k}}\mbox{   for $h,k\neq 1$.}
\end{equation}

Let $\epsilon_j:Y_j\to \P^{n-3}$ be the blow
up of $p_j$ with exceptional divisor
$E_j$, for $j=1,\ldots n-1$.
By induction and Lemma \ref{lem:connect}, after possibly passing to another Kapranov map,
we may assume that
$\Bl\L_{n,Y_1}$
is a codimension two
linear space, and $\epsilon_1(\Bl\L_{n,Y_1})=V^n_{1,\ldots,n-4}$.

>From Lemma \ref{step2} we deduce that if $H$ is a vital hyperplane containing $p_1$ but not
containing $V^n_{1,\ldots,n-4}$, the
restriction of $\L_n$ to $H$ is
dominant.

\begin{claim}
We may choose $H$ in such a way that
$\L_{n|H}$ is free of fixed divisors.
\end{claim}
\begin{proof}[Proof of the Claim]Let $H_1=H_{n-4,n-3}^{n\vee}$ and
$H_2=H_{n-5,n-2}^{n\vee}$ be two vital
hyperplanes. Assume that the restriction of
$\L_n$ to $H_i$ has a not empty fixed
divisor $F_i$. Then  from Lemma \ref{fixed}
the support of $F_1$ and of $F_2$ are
respectively
$V^n_{2,\ldots,n-5,n-2,n-1}$ and
$V^n_{2,\ldots,n-6,n-4,n-3,n-1}$.

Lemma \ref{step1}, applied to the point
$p_{n-1}$, yields
that $\L_n$ dominates $p_{n-1}$ at first
order. Let $\K_{n-1}$ be the induced
Kapranov set on $E_{n-1}$. Then
$\Bl\L_{n,E_{n-1}}\cap{E_{n-1}}$ has  two
irreducible components meeting in codimension
$4$. On the other hand by Lemma
\ref{lem:connect} $\Bl\L_{n,E_{n-1}}\cap{E_{n-1}}$ is
of type $\Phi_1$, a contradiction.
\end{proof}

Let $H$ be a vital hyperplane such that the restriction of $\L_n$ to $H$ is dominant and $\L_{n|H}$ is free of fixed divisors.
Then by Lemma  \ref{lem:connect} and Lemma \ref{step1} there is a Kapranov point $p_h\in H$ such that
$$\mult_{p_h}\L_n=\mult_{p_h}\L_{n|H}=\deg \L_{n|H}=\deg\L_i$$ so that $$\mult_{p_h}\L_i=\deg\L_i$$
We conclude by
Proposition \ref{pro:ind} that
$f$ factors via the forgetful map $\phi_h$.
That is $f=g\circ\phi_h$ for some morphism, with connected fibers, $g:\Mo{n-1}\to\P^1$.
By induction hypothesis $g$ is a forgetful map. Henceforth  $f$ is forgetful.
\end{proof}

As a further attempt to make the result
and its main ideas clearer we add a proof of Theorem
\ref{thm:p1}  kindly suggested by James M\textsuperscript{c}Kernan
and Jenia Tevelev, \cite{MT}. We thank
James and Jenia for their help in
translating our projective arguments into
a better known dictionary and also to
produce a proof that shows at best the
``almost toric'' nature of $\Mo{n}$.

\begin{proof}[Proof of Theorem \ref{thm:p1}\ {\rm (\cite{MT})}] We proceed by induction on $n$.  We may assume that $n\geq 7$.
Let $f_{ij}$ be the restriction of $f$ to $\delta_{ij}:=E_{i,j}\simeq\Mo{n-1}$.  There are two cases:
\begin{enumerate} 
\item $f_{ij}$ is never constant.  
\item $f_{ij}$ is constant, for at least one pair $\{i,j\}$.  
\end{enumerate} 
 
Suppose we have (1).  We will derive a
contradiction.  By induction, we know
that each $f_{ij}$  is a composition of
a forgetful map $f'_{ij}\colon\,\delta_{ij}\to\P^1$ and a finite  
morphism $g_{ij}\colon\P^1\to \P^1$. 
Notice that forgetful maps $f'_{ij}$ and $f'_{kl}$ agree on intersections 
$\delta_{ij}\cap\delta_{kl}$ each time these divisors have a non-empty intersection, i.e.~when 
$\{i,j\}$ and $\{k,l\}$ do not contain common elements. Indeed, both $f'_{ij}$ and $f'_{kl}$ 
restrict to some forgetful maps $\delta_{ij}\cap\delta_{kl}\simeq\Mo{n-2}\to\Mo{4}\simeq\P^1$.
But 
$$(f'_{ij})^*{\O}_{\P^1}(a)\simeq (f_{ij})^*{\O}_{\P^1}(1)\simeq (f_{kl})^*{\O}_{\P^1}(1)\simeq (f'_{kl})^*{\O}_{\P^1}(b)$$
for some positive integers $a$ and $b$ and 
and a forgetful map $\Mo{n-2}\to\Mo{4}\simeq\P^1$ 
is uniquely determined by the pull-back of ${\O}_{\P^1}(1)$ (up to a multiple).

There are two cases, up to the obvious symmetries, 
$$
f'_{12}=\begin{cases} \pi_{3,4,5,6} \\
                     \pi_{\{1,2\},3,4,5}.
\end{cases}
$$
Consider $f_{67}$.  Up to even more symmetries, we must have
$$
f'_{67}=\begin{cases} \pi_{3,4,5,\{6,7\}} & \text{if $f'_{12}=\pi_{3,4,5,6}$} \\
                     \pi_{2,3,4,5}       & \text{if $f'_{12}=\pi_{\{1,2\},3,4,5}$.}
\end{cases}
$$
Possibly switching $\{1,2\}$ and $\{6,7\}$ we might as well assume that 
$$
 f'_{12}=\pi_{\{1,2\},3,4,5} \qquad
 \text{and} \qquad f'_{67}=\pi_{2,3,4,5}.
$$
It follows that $f$ restricted to both $\delta_{12}\cap \delta_{34}$ and $\delta_{34}\cap \delta_{67}$ is
constant.  But then $f'_{34}=\pi_{1267}$ and so $f'_{15}=\pi_{\{1,5\},2,6,7}$.  On the other
hand $f'_{15}=\pi_{\{1,5\},2,3,4}$, a contradiction.

So we must have (2). Assume that $f$ contracts, say, $\delta_{1n}$.  Let
$f_n\colon\Mo{n}\to \P^{ n-3}$ be the Kapranov map associated to the Kapranov set
$\{p_1,\ldots,p_{n-1}\}$.  That is, the map that blows up $n-1$ points
$\{p_1,\ldots,p_{n-1}\}$ in linear general position, and every linear space spanned by
these points.  Then $f_n$ contracts $\delta_{1,n}$ to the point $p_{1}$.  Let $\psi\colon
L_n\to {\P^{ n-3}}$ be the birational morphism which blows up every linear space blown up
by $\pi$, except those which contain $p_{1}$.  Notice that $L_n$ is a toric variety, and
there is a birational morphism $\f\colon\Mo{n}\to L_n$ which factors $f_n=\psi\circ\f$.
This yields an induced rational map $g=f\circ \f^{-1}\colon L_n\rat{\P^1}$.  Then the
rational map $g\colon L_n\rat \P^1$ is
\begin{itemize}
\item[(a)] regular at $p_1$;
\item[(b)] has a base locus of codimension 2 (as for any rational pencil);
\item[(c)] has a base locus contained in
  the indeterminacy locus of the   
birational map $\f^{-1}\colon L_n\rat \Mo{n}$. The latter is the union of linear subspaces  
passing through $p_1$ (in the Kapranov model).
\end{itemize}

It follows that the map $g\colon L_n\rat \P^1$
is actually regular. As for any morphism,
with connected fibers, to $\P^1$, 
it is given by a complete linear
series.  Therefore it is a toric   
morphism.
To conclude, we prove that it is one    
of the forgetful maps by studying the induced
map of fans. 

The fan $F_n$ of $L_n$ is obtained by taking the standard fan for $\P^{n-3}$ (with rays
$R_1,\ldots,R_{n-2}$, of which the first $n-2$ correspond to coordinate hyperplanes)
followed by its barycentric subdivision.  A toric morphism $L_n\to\P^1$ corresponds to a
linear map $g\colon \R^{n-3}\to\R$ that sends each cone of $F_n$ to either $\{0\}$, the
positive ray $\R_+$, or the negative ray $\R_-$.  We may assume without loss of generality
that $g(R_1)=\R_+$ and $g(R_2)=\R_-$.  The fan of $L_n$ contains the ray $C=R_1+R_2$ and
we should have $g(C)=0$.  Therefore, $g$ sends primitive generators of $R_1$ and $R_2$ to
opposite vectors $v_1$, and $v_2$ in $\R$.  We claim that $g(R_i)=0$ for $i>2$.  Assuming the
claim, we then have $v_1,v_2=\pm 1$ and the toric morphism is a resolution of the linear
projection from the intersection of the first two coordinate hyperplanes, which
corresponds to one of the forgetful maps.

Back to the claim, and arguing by contradiction, suppose that $g(R_3) =\R_+$.  Then
$g(-R_3)=\R_-$. But $-R_3$ is the barycenter of the top-dimensional cone of $F_n$ spanned
by all $R_i$ for $i\ne 3$. Then $F_n$ contains a cone with rays $R_1$, and $-R_3$, which
does not map to any cone.
\end{proof}

\begin{remark} It is interesting to note
  that the space $L_n$ is a moduli
space in its own right,  introduced by
Losev and Manin \cite{LM}, and the
morphism $\Mo{n}$ to $L_n$ has a natural modular interpretation.  
\end{remark}

\begin{corollary} Let
  $f\colon\overline{M}_{0,n}\to
  \P^1\iso\overline{M}_{0,4}$ be a non constant
  morphism. Then $f$ factors through a forgetful map and
  a finite morphism.
\label{cor:p2}
\end{corollary}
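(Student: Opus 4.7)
The plan is to reduce to Theorem \ref{thm:p1} via Stein factorization, exactly as was already done inside the proof of Lemma \ref{lem:connect}. Given a non-constant morphism $f:\Mo{n}\to\P^1$, I would first form its Stein factorization $f=h\circ f'$, where $f':\Mo{n}\to C$ has connected fibers and $h:C\to\P^1$ is finite. The target $C$ a priori need not be smooth, so I would then compose with the normalization $\nu:\tilde{C}\to C$: since $f'$ has connected (hence geometrically irreducible) fibers and $\Mo{n}$ is smooth, $f'$ lifts uniquely to a morphism $\tilde{f}:\Mo{n}\to\tilde{C}$ with $f'=\nu\circ \tilde{f}$.

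Next I would identify $\tilde{C}$. The variety $\Mo{n}$ is rational, so its image $\tilde{C}$ under the surjective morphism $\tilde{f}$ is a smooth projective rational curve, that is $\tilde{C}\cong\P^1$. Thus $\tilde{f}:\Mo{n}\to\P^1$ is a non-constant morphism with connected fibers, and the composite $\nu\circ h:\tilde{C}\to\P^1$ is a finite morphism of smooth projective curves.

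At this point Theorem \ref{thm:p1} applies directly to $\tilde{f}$, yielding that $\tilde{f}$ is a forgetful map. Setting $g:=h\circ\nu:\P^1\to\P^1$, which is finite, we obtain the desired factorization $f=g\circ\tilde{f}$ as a forgetful map followed by a finite morphism. There is no real obstacle here: the content of the argument is entirely contained in Theorem \ref{thm:p1}, and the Stein factorization / normalization step is standard and already appears verbatim in Lemma \ref{lem:connect}.
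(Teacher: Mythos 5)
Your proposal is correct and follows exactly the paper's own argument: the paper's proof of Corollary \ref{cor:p2} is precisely the one-line reduction via Stein factorization and normalization to Theorem \ref{thm:p1}, as already carried out in Lemma \ref{lem:connect}. (Only a trivial slip: the finite morphism is $h\circ\nu$, which you do write correctly when defining $g$.)
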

\begin{proof} Taking the Stein factorization and the normalization, as in Lemma
\ref{lem:connect}, we reduce the claim to Theorem \ref{thm:p1}.
\end{proof}

\section{Morphisms to $\overline{M}_{0,r}$}

In this final section we apply Theorem \ref{thm:p1} to deduce
that in fact any surjective morphism $f: \Mo{n} \to \Mo{r}$
is a forgetful map. As a corollary we compute the automorphisms group of $\Mo{n}$.

\begin{theorem}
Let $f:\overline{M}_{0,n}\to
\overline{M}_{0,r}$ be a
dominant morphism with connected fibers. Then $f$ is a
forgetful map.
\label{th:forg}
\end{theorem}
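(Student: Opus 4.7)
The plan is to induct on $r$, with base case $r=4$ given directly by Theorem~\ref{thm:p1}. For the inductive step, assume $r\geq 5$ and the theorem holds for all smaller targets $\Mo{r'}$ with $4\leq r'<r$. I compose $f$ with each of the $r$ forgetful maps $\phi_j\colon\Mo{r}\to\Mo{r-1}$ to obtain auxiliary morphisms $g_j:=\phi_j\circ f\colon\Mo{n}\to\Mo{r-1}$. Each $g_j$ is dominant with connected fibers (both $\phi_j$ and $f$ are), hence by the inductive hypothesis each is a forgetful map, which I write $g_j=\phi_{I_j}$ for a subset $I_j\subset\{1,\dots,n\}$ of size $n-r+1$; the $I_j$'s are pairwise distinct because the $\phi_j$'s are and $f$ is dominant.

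Next I extract $f$ from the collection $\{I_j\}_{j=1}^r$. Let $I:=\bigcap_{j=1}^r I_j$, so $|I|\leq n-r$ by distinctness. For any $k\in I$ and each $j$, the factorization $\phi_{I_j}=\phi_{I_j\setminus\{k\}}\circ\phi_k$ forces $g_j=\phi_j\circ f$ to be constant on fibers of $\phi_k\colon\Mo{n}\to\Mo{n-1}$. Since the joint morphism $(\phi_j)_{j=1}^r\colon\Mo{r}\to\Mo{r-1}^r$ is generically injective (a generic smooth $r$-pointed rational curve is determined by its $r$ subcurves obtained by forgetting one marking each), $f$ itself is constant on fibers of $\phi_k$, and hence factors through $\phi_k$. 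Applied to every $k\in I$ this yields a factorization $f=h\circ\phi_I$ with $h\colon\Mo{n-|I|}\to\Mo{r}$ dominant with connected fibers, and moreover $\phi_j\circ h=\phi_{I_j\setminus I}$ for each $j$.

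The main obstacle is proving the equality $|I|=n-r$: once established, $h\colon\Mo{r}\to\Mo{r}$ is a surjective morphism of smooth projective varieties of the same dimension with connected fibers, hence a birational morphism and in fact an isomorphism by standard rigidity; then $f=h\circ\phi_I$ is a forgetful map by Definition~\ref{def:forg}, closing the induction. For $r\geq 6$ the exact pairwise bound $|I_j\cap I_{j'}|=n-r$ follows from applying the induction hypothesis also to the compositions $\phi_j\circ\phi_{j'}\circ f\colon\Mo{n}\to\Mo{r-2}$, and combining these pairwise bounds with further iteration on triples and higher compositions forces the uniform shape $I_j=I\cup\{\tau(j)\}$ with $|I|=n-r$ and $\tau$ a bijection. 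The subtler case is $r=5$, where $\Mo{r-2}=\Mo{3}$ is a point and this trick is unavailable; there I invoke the second part of Proposition~\ref{pro:ind} directly, showing on a Kapranov model of $\Mo{5}$ that the linear system on $\P^{n-3}$ induced by $f$ is a system of hyperplanes, whence $f$ is a forgetful map.
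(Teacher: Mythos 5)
Your overall strategy --- induction on $r$ with Theorem \ref{thm:p1} as the base case, composing $f$ with the forgetful maps $\phi_j\colon\Mo{r}\to\Mo{r-1}$ and applying the inductive hypothesis to each $\phi_j\circ f$ --- is the same as the paper's. But the two points where you diverge are exactly where the gaps are. First, the combinatorial step. Pairwise bounds $|I_j\cap I_{j'}|=n-r$ do \emph{not} force the sunflower shape $I_j=I\cup\{\tau(j)\}$: the configuration $I_1=A\cup\{x_1\}$, $I_2=A\cup\{x_2\}$, $I_j=(A\setminus\{y_j\})\cup\{x_1,x_2\}$ for $j\geq 3$ (with the $y_j\in A$ distinct) satisfies every pairwise constraint yet has $|\bigcap_j I_j|<n-r$. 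Nor does "iteration on triples and higher compositions" rescue this: all those compositions give is that the forgotten set $J$ of $\phi_{\{j_1,\ldots,j_t\}}\circ f$ has cardinality $n-r+t$ and contains each $I_{j_s}$, and the configuration above is consistent with every such containment. What actually forces the sunflower is compatibility of the \emph{labelings}: writing $\phi_j\circ f=\phi_{I_j}$ composed with an identification $\sigma_j$ of $\{1,\dots,r\}\setminus\{j\}$ with $\{1,\dots,n\}\setminus I_j$, the identity $\phi_{j'}\circ\phi_j\circ f=\phi_j\circ\phi_{j'}\circ f$ forces $\sigma_j=\sigma_{j'}$ on $\{1,\dots,r\}\setminus\{j,j'\}$, whence a single injection $\sigma$ with $I_j=\bigl(\{1,\dots,n\}\setminus\sigma(\{1,\dots,r\})\bigr)\cup\{\sigma(j)\}$. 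That argument is not in your proposal, and even it requires $r-2\geq 4$, so it leaves $r=5$ untouched.

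Second, for $r=5$ you propose to "invoke the second part of Proposition \ref{pro:ind} directly, showing \dots that the linear system on $\P^{n-3}$ induced by $f$ is a system of hyperplanes" --- but showing that the linear system has degree one \emph{is} the content of the theorem in that case; no argument is offered. This is precisely where the paper does its real work: it writes $f_r^*\O(1)=f_{r*}^{-1}\Lambda_{r-1}+E_{r-1,r}$, reduces to showing that $f^*(E_{i,r})$ is $f_n$-exceptional for every $i\leq r-1$, handles all $i<r-1$ with one auxiliary forgetful map, and gets the last divisor by comparing the Kapranov maps attached to two different auxiliary forgetful maps (possible since $r\geq 5$ provides at least four, two of which can be arranged to induce the same $f_n$). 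That mechanism works uniformly for all $r\geq 5$ and replaces both your combinatorial step and your missing $r=5$ argument; it is absent from your proposal. A smaller point: even granting $|I|=n-r$, your $h\in\Aut(\Mo{r})$ must be shown to permute the markings before $f=h\circ\phi_I$ qualifies as a forgetful map in the sense of Definition \ref{def:forg}; the paper gets this for free from Proposition \ref{pro:ind}, while you would need a separate argument (e.g.\ the Cremona degree computation from the paper's final theorem applied to $h$, which does permute the $\phi_j$ by construction).
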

\begin{proof} We prove the Theorem by induction on $r$. The first step of the induction process is the content of Theorem \ref{thm:p1}.

Let us fix a Kapranov map $f_r: \Mo{r} \to \P^{r-3}$ and consider the
forgetful map
$$\phi_{r-1}:\overline{M}_{0,r}\to\Mo{r-1},$$
the Kapranov map $f_{r-1}: \Mo{r-1} \to \P^{r-4}$
and the projection
$\pi:\P^{r-3}\rat\P^{r-4}$ given by the linear system
  $\Lambda_{r-1}=|\O_{\P^{r-3}}(1)\otimes\I_{p_{r-1}}|$.
Then by induction hypothesis
 $(\phi_{r-1}\circ f):\overline{M}_{0,n}\to\Mo{r-1}$ is dominant
 and with connected fibers, hence a
  forgetful map.  This means that we can choose a Kapranov map
  $f_n: \Mo{n} \to \P^{n-3}$ such that
$$f_{n*}((f_r\circ
f)^{-1}_*(\Lambda_{r-1}))\subset|\O_{\P^{n-3}}(1)| .$$
Recall that, from Proposition \ref{pro:ind}, we obtain the thesis
if we show that
$$f_{n*}((f_r\circ
f)^*(\O_{\P^{r-3}}(1))\subset|\O_{\P^{n-3}}(1)| .$$
By construction we have $\Lambda_{r-1}=|\O_{\P^{r-3}}(1)\otimes\I_{p_{r-1}}|$
and $f_r^{-1}(p_{r-1})=E_{r,r-1}$. To conclude it is enough to
show that $f^{*}(E_{r,r-1})$ is $f_n-$exceptional.

The following  diagram, will help us along the proof,

\[
\xymatrix{
\overline{M}_{0,n}\ar[d]^{f_{n}}\ar[r]^f
&\overline{M}_{0,r}
\ar[d]^{f_r}\ar[r]^{\phi_{r-1}}&\Mo{r-1} \ar[d]^{f_{r-1}} &
                                                                \\
\P^{n-3}\ar@{.>}[r]^\f&\P^{r-3}\ar@{.>}[r]^\pi&\P^{r-4}
 }
\]

 Let $\L$ be the
linear system associated to the
 map $$(\pi\circ \f)=(f_{r-1}\circ \phi_{r-1}
 \circ f\circ f_n^{-1}).$$
By induction hypothesis
we may assume  that $f_{n}$ is such that $\L=|\O(1)\otimes\I_P|$,
where $P=\langle
p_{r-1},\ldots,p_{n-1}\rangle$. We fix notations in such a way that
$(\pi\circ\f)(p_j)=p_{j}$, for $j<r-1$, and
$\pi(p_j)=p_j$, for $j<r-1$.

For any $E_{j,r}\neq E_{(r-1),r}$ the
map $\phi_{r-1|E_{j,r}}:\Mo{r-1}\to\Mo{r-1}$
is a forgetful map onto $\Mo{r-2}$.
Then for any $E_{i,r}\subset\Mo{r}$, with
$i<r-1$, we have that
$$f^*(E_{i,r})=(\phi_{r-1}\circ f)^*(E_{i,r-1})=E_{i,n}$$
This shows that $f^*(E_{i,r})$ is
$f_n$-exceptional for $i < r-1$.

Notice that, once having fixed $f_r$ and chosen the forgetful
map $\phi_{r-1}$, we have found $f_n$ such that $f^{*}(E_{i,r})=E_{i,n}$
for $i < r-1$.

We are assuming $r\geq 5$ hence, once we fix the Kapranov map
$f_r:\Mo{r}\to\P^{r-3}$ there are at least 4
possible forgetful maps
$\phi_i:\Mo{r}\to\Mo{r-1}$, with $i <r$.
To any such $\phi_i$ we
may associate a Kapranov map
$f_{n_i}:\Mo{n}\to\P^{n-3}$ in such a
way that $f^*(E_{j,r})=E_{j,n_i}$, for $j\neq i$.
On the other hand, the divisor  $E_{i,j}\subset\Mo{n}$ is
sent to a point only by $f_i$ and
$f_j$.
Then we may assume that $n_1=n_2=n$.
 The image of divisors $E_{i,r}$
via $f_{n*}f^*$ does not depend on the
map $\phi_i$. Therefore $f_{n*}f^*(E_{i,r})$ is
a point for any $i=1,\ldots, r-1$.

By definition
$$f_r^*(\O(1))=f^{-1}_{r*}\Lambda_{r-1}+E_{(r-1),r}$$
hence we have
$$\L= f_{n*}((f_r\circ f)^*(\O(1)))=f_{n*}((f_r\circ
f)^*(\Lambda_{r-1}))\subset|\O_{\P^{n-3}}(1)|.$$
and $\f$ is
given by a linear system of
hyperplanes.
This is
equivalent to our statement by
Proposition \ref{pro:ind}
\end{proof}

This easily extends to morphisms onto
products of $\Mo{r_i}$.

\begin{corollary} Let $f:\overline{M}_{0,n}\to
  \overline{M}_{0,r_1}\times\ldots\times\overline{M}_{0,r_h}$ be a dominant fiber type morphism
with connected fibers. Then $f$ is a forgetful map.
\label{cor:main}
\end{corollary}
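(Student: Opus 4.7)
The plan is to reduce the corollary to the single-factor case, Theorem \ref{th:forg}, by projecting $f$ onto each factor of the target. For each $i=1,\ldots,h$ I would let $\pi_i:\prod_j\overline{M}_{0,r_j}\to\overline{M}_{0,r_i}$ denote the projection and set $f_i:=\pi_i\circ f$. Since $f$ is dominant and $\pi_i$ surjective, $f_i$ is dominant, and from $\dim\overline{M}_{0,n}>\dim\prod_j\overline{M}_{0,r_j}\geq\dim\overline{M}_{0,r_i}$ each $f_i$ is of fiber type.

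The key step will be to verify that each $f_i$ inherits connected fibers from $f$. Because $f$ is proper with connected fibers, $f_*\O_{\overline{M}_{0,n}}=\O_{\prod_j\overline{M}_{0,r_j}}$. Pushing forward via $\pi_i$, and using that each fiber of $\pi_i$ is $\prod_{j\neq i}\overline{M}_{0,r_j}$, which is proper, connected and satisfies $H^0(\O)=\C$, I would obtain
$$(f_i)_*\O_{\overline{M}_{0,n}}=(\pi_i)_*\,f_*\O_{\overline{M}_{0,n}}=(\pi_i)_*\O_{\prod_j\overline{M}_{0,r_j}}=\O_{\overline{M}_{0,r_i}},$$
so that $f_i$ has connected fibers.

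Theorem \ref{th:forg} then applies to each $f_i$ and delivers, for each $i$, an index set $I_i\subset\{1,\ldots,n\}$ such that $f_i=\phi_{I_i}$ up to a permutation of the markings on $\overline{M}_{0,r_i}$, in the sense of Definition \ref{def:forg}. By the universal property of products, $f$ itself must coincide with $(\phi_{I_1},\ldots,\phi_{I_h})$, which by definition is the forgetful map onto $\prod_j\overline{M}_{0,r_j}$ associated to the tuple $(I_1,\ldots,I_h)$ of forgotten markings.

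There is no serious obstacle to overcome here: the only real content is the push-forward computation transporting connected fibers to each factor, and once that is in place the conclusion follows immediately from Theorem \ref{th:forg} combined with the universal property of the product. This reflects the fact that the result is advertised in the introduction as an easy extension of the single-factor theorem.
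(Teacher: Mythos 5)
Your proposal is correct and follows essentially the same route as the paper, whose entire proof is the one-line remark that it suffices to compose $f$ with the projections onto the factors and invoke Theorem \ref{th:forg}. You additionally justify, via the push-forward computation $(f_i)_*\O_{\overline{M}_{0,n}}=\O_{\overline{M}_{0,r_i}}$, that each composition $\pi_i\circ f$ still has connected fibers --- a point the paper leaves implicit --- and then correctly reassemble $f$ by the universal property of the product.
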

\begin{proof} It is enough to compose
  $f$ with the projection onto the factors.
\end{proof}

From
Theorem \ref{th:forg}
an automorphism of $\Mo{n}$
must preserve all forgetful maps. This
gives a very strong
condition on the induced linear system of
$\P^{n-3}$.
We are ready to prove the main result on $Aut(\Mo{n})$.
This is  classical for $n=5$.

\begin{theorem} Assume that $n\geq 5$. Then $\Aut(\Mo{n})=S_n$, the
symmetric group on n elements.
\end{theorem}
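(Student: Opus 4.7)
The plan is to argue by induction on $n$, with base case $n=5$: there $\Mo{5}$ is a del Pezzo surface of degree $5$ and it is classical that $\Aut(\Mo{5})=S_5$. For the inductive step I exploit that the natural $S_n$-action on $\Mo{n}$ by permutation of markings gives a faithful embedding $S_n\hookrightarrow\Aut(\Mo{n})$, so it suffices to show every $\sigma\in\Aut(\Mo{n})$ is realized by a permutation; equivalently, after composing with the inverse of a suitable $S_n$-automorphism, $\sigma$ must be the identity.

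Given $\sigma\in\Aut(\Mo{n})$, for each $i$ the composition $\phi_i\circ\sigma:\Mo{n}\to\Mo{n-1}$ is a dominant morphism with connected fibers, hence a forgetful map by Theorem \ref{th:forg}. Using the inductive hypothesis $\Aut(\Mo{n-1})=S_{n-1}$ together with Definition \ref{def:forg}, this forces $\phi_i\circ\sigma = h_i\circ\phi_{\tau(i)}$ for some bijection $\tau\in S_n$ and some $h_i\in S_{n-1}$. Replacing $\sigma$ by $\sigma\circ\sigma_\tau^{-1}$, where $\sigma_\tau$ is the marking-permutation automorphism coming from $\tau$, I may assume $\tau=\mathrm{id}$, so that $\phi_i\circ\sigma = h_i\circ\phi_i$ for every $i$.

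The crucial point is now to show each $h_i$ is trivial. For $i\neq j$ I would compute $\phi_{\{i,j\}}\circ\sigma$ in two ways, using $\phi_j\circ\phi_i=\phi_i\circ\phi_j=\phi_{\{i,j\}}$: pushing $\phi_j$ past $h_i$ gives $\phi_j\circ h_i = \bar h_i\circ\phi_{h_i^{-1}(j)}$ and symmetrically on the other side, whence
$$\bar h_i\circ\phi_{\{i,h_i^{-1}(j)\}} = \bar h_j\circ\phi_{\{j,h_j^{-1}(i)\}}.$$
Since a forgetful map to $\Mo{n-2}$ is determined by its forgotten set, the two $2$-element sets must coincide, and together with $i\neq j$ this forces $h_i(j)=j$. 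Letting $j$ range over $\{1,\dots,n\}\setminus\{i\}$ yields $h_i=\mathrm{id}$, so $\phi_i\circ\sigma=\phi_i$ as morphisms for every $i$.

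To conclude I study the action of $\sigma$ on boundary. A case analysis of $\phi_i(E_I)$ shows that among all boundary divisors of $\Mo{n}$ only the $E_{i,k}$ with $k\neq i$ surject onto $\Mo{n-1}$ under $\phi_i$; the hypothesis $n\geq 5$ guarantees $|\{i,j\}^c|=n-2\neq 2$, so the identification $E_I=E_{I^c}$ introduces no ambiguity here. Combining with $\phi_j\circ\sigma=\phi_j$ pins $\sigma(E_{i,j})=E_{i,j}$. On a generic fiber $F\cong\P^1$ of $\phi_i$, $\sigma$ then restricts to an automorphism of $\P^1$ fixing the $n-1\geq 4$ distinct points $F\cap E_{i,j}$ for $j\neq i$, hence $\sigma|_F=\mathrm{id}$, and running over the dense set of generic fibers gives $\sigma=\mathrm{id}$. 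The main obstacle I anticipate is the bookkeeping in the compatibility step for the $h_i$: one must carefully track how each permutation acts on the shifting label set $\{1,\dots,n\}\setminus\{i\}$ as forgetful maps are composed in different orders.
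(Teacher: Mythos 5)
Your reduction to the case $\phi_i\circ\sigma=\phi_i$ for every $i$ is sound: Theorem \ref{th:forg} gives $\phi_i\circ\sigma=h_i\circ\phi_{\tau(i)}$ (that $\tau$ is a bijection should be said explicitly --- it follows because distinct indices give distinct fibrations, e.g.\ general fibers of $\phi_i$ and $\phi_j$ map under a Kapranov map $f_m$ to lines through the distinct points $p_i$ and $p_j$), and your comparison of the two factorizations of $\phi_{\{i,j\}}\circ\sigma$, using that a forgetful map onto $\Mo{n-2}$ determines its forgotten pair, correctly kills the $h_i$. The genuine gap is in the final step. You conclude $\sigma(E_{i,j})=E_{i,j}$ from a classification of which \emph{boundary} divisors dominate $\Mo{n-1}$ under $\phi_i$; but that classification only identifies $\sigma(E_{i,j})$ if you already know that $\sigma$ carries boundary divisors to boundary divisors, and nothing in your argument establishes that an automorphism of $\Mo{n}$ preserves the boundary. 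From $\phi_i\circ\sigma=\phi_i$ and $\phi_j\circ\sigma=\phi_j$ you only learn that $\sigma(E_{i,j})$ is an irreducible divisor dominating $\Mo{n-1}$ under both maps, and plenty of non-boundary divisors do that. The step is repairable within your framework: since $\phi_I\circ\sigma=\phi_I$ for every $I$, $\sigma$ preserves each fiber of each map $\phi_J:\Mo{n}\to\Mo{4}$ with $J=\{1,\ldots,n\}\setminus\{i,j,k,l\}$, in particular the fiber over the boundary point of $\Mo{4}$ where $q_i,q_j$ sit on one component and $q_k,q_l$ on the other; that fiber is a union of boundary divisors, $\sigma$ permutes its irreducible components, and $E_{\{i,j\}}$ is the unique component common to these fibers as $\{k,l\}$ varies. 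But as written the assertion $\sigma(E_{i,j})=E_{i,j}$ is unjustified.

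It is worth noting that the paper's proof of the kernel triviality sidesteps the boundary entirely and does not even need $h_i=\mathrm{id}$. Knowing only that $\phi_i\circ g$ is a forgetful map forgetting $i$, the induced Cremona transformation $\gamma_n=f_n\circ g\circ f_n^{-1}$ of $\P^{n-3}$ must send general lines through each Kapranov point $p_i$ to lines through $p_i$ and general rational normal curves through $\K$ to such curves; writing $\H_n\subset|\O(d)|$ for the defining linear system, the two degree conditions $d-\mult_{p_i}\H_n=1$ and $(n-3)d-\sum_{i=1}^{n-1}\mult_{p_i}\H_n=n-3$ force $d=1$, so $\gamma_n$ is a projectivity fixing $n-1$ points in general position in $\P^{n-3}$, hence the identity. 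Your fiberwise argument on the universal curve is a reasonable alternative and arguably more intrinsic, but it carries the extra burden of controlling the boundary, which is exactly where your write-up currently breaks down.
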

\begin{proof}
Let $g\in\Aut(\Mo{n})$ be
an automorphism. Let
$\phi_i:\Mo{n}\to\Mo{n-1}$ be the $i$-th
forgetful map. Then by
Theorem \ref{th:forg}
$g\circ\phi_i$
is a map forgetting an index
$j_i\in\{1,\ldots,n\}$.

This means that we can associate to $g$
the permutation $\{j_1,\ldots,j_n\}\in S_n$. Let
$\chi:\Aut(\Mo{n})\to S_n$ be the
associated map. The map
$\chi$ is a surjective morphisms. A
simple transposition is 
realized by the standard Cremona
transformations we recalled in
Definition \ref{def:cre}.

The main point is to determine the
kernel. Assume that
$\chi(g)=1$. That is
$g\circ\phi_i$ is forgetting the $i$-th
index for any
 $i\in\{1,\ldots,n\}$.
Fix a Kapranov map
 $f_n:\Mo{n}\to\P^{n-3}$. The
 automorphism $g$
induces
a Cremona transformation $\gamma_n$ on $\P^{n-3}$
that stabilizes the lines through the Kapranov points and also the rational normal curves
through $\K$.
Let
$\H_n\subset|\O(d)|$ be the
linear system associated to $\gamma_n$. Let $l_i\subset\P^{n-3}$ be a general line through $p_i$ and $\Gamma_n$ a general rational normal curve through $\K$. Then we have
$$\deg(\gamma_n(l_i))=d-\mult_{p_i}\H_n=1,$$
for any $i\in\{1,\ldots,n-1\}$, and
$$\deg(\gamma_n(\Gamma_n))=(n-3)d-\sum_{i=1}^{n-1}\mult_{p_i}\H_n=n-3.$$
These yield
$$n-3=(n-3)d-(n-1)(d-1)$$
and finally $d=1$. That is $\gamma_n$ is a projectivity that fixes $n-1$ points. Then $\gamma_n$ and henceforth $g$ are the identity.
\end{proof}

\end{document}